\theoremstyle{plain}
\newtheorem{theorem}{Theorem}[section]
\newtheorem{prop}[theorem]{Proposition}
\newtheorem{cor}[theorem]{Corollary}
\newtheorem{lem}[theorem]{Lemma}
\theoremstyle{definition}
\newtheorem{defn}[theorem]{Definition}
\newtheorem{rem}[theorem]{Remark}
\newtheorem{exm}[theorem]{Example}
\newcommand{\A}{\mathbb{A}}
\newcommand{\N}{\mathbb{N}}
\newcommand{\Q}{\mathbb{Q}}
\newcommand{\R}{\mathbb{R}}
\newcommand{\V}{\mathbb{V}}
\newcommand{\Z}{\mathbb{Z}}
\newcommand{\m}{\mathfrak{m}}
\newcommand{\p}{\mathfrak{p}}
\DeclareMathOperator{\Gal{Gal}}
\newcommand{\length}{\operatorname{length}}
\title{Monomial Valuations, Cusp Singularities, and Continued Fractions}
\author[Bruce]{Juliette Bruce}
\author[Logue]{Molly Logue}
\author[Walker]{Robert Walker}
\address{University of Michigan, Ann Arbor, MI, 48109}
\email{juliette.bruce@math.wisc.edu}
\address{University of Michigan, Ann Arbor, MI, 48109}
\email{mmlogue@umich.edu}
\address{Department of Mathematics, University of Michigan, Ann Arbor, MI, 48109}
\email{robmarsw@umich.edu}
\date{\today}                                           % Activate to display a given date or no date
\begin{document}

\begin{abstract}This paper explores the relationship between real valued monomial valuations on $k(x,y)$, the resolution of cusp singularities, and continued fractions. It is shown that up to equivalence there is a one to one correspondence between real valued monomial valuations on $k(x,y)$ and continued fraction expansions of real numbers between zero and one. This relationship with continued fractions is then used to provide a characterization of the valuation rings for real valued monomial valuations on $k(x,y)$. In the case when the monomial valuation is equivalent to an integral monomial valuation, we exhibit explicit generators of the valuation rings. Finally, we demonstrate that if $\nu$ is a monomial valuation such that $\nu(x)=a$ and $\nu(y)=b$, where $a$ and $b$ are relatively prime positive integers larger than one, then $\nu$ governs a resolution of the singularities of the plane curve $x^{b}=y^{a}$ in a way we make explicit. Further, we provide an exact bound on the number of blow ups needed to resolve singularities in terms of the continued fraction of $a/b$. \end{abstract}

%\begin{abstract}This paper explores the relationship between monomial valuations on $k(x,y)$, the resolution of cusp singularities, and continued fractions. It is shown that up to equivalence there is a one to one correspondence between monomial valuations on $k(x,y)$ and continued fraction expansions of rational numbers between zero and one. We exhibit explicit generators of the valuation rings for these monomial valuations. Further, we demonstrate that if $\nu$ is a monomial valuation such that $\nu(x)=a$ and $\nu(y)=b$ where $a$ and $b$ are relatively prime positive integers, then $\nu$ governs the resolution of the singularities of the plane curve $x^{b}=y^{a}$ in a way we make explicit. Further, we provide an exact bound on the number of blow ups needed in terms of the continued fraction of $b/a$. \end{abstract}

\thanks{The first and second authors were supported by NSF grant DMS-1001764 and NSF RTG grant DMS-0943832.The third author was supported by a NSF GRF under Grant No. PGF-031543.}

\maketitle

%%%%%%%%%%%%%%%%%%%%%%%%%%%%%%%BeginTextBellow%%%%%%%%%%%%%%%%%%%%%%%%%%%%%%%%%%%%%%%
%%%%%%%%%%%%%%%%%%%%%%%%%%%%%%%BeginTextBellow%%%%%%%%%%%%%%%%%%%%%%%%%%%%%%%%%%%%%%%
%%%%%%%%%%%%%%%%%%%%%%%%%%%%%%%BeginTextBellow%%%%%%%%%%%%%%%%%%%%%%%%%%%%%%%%%%%%%%%
%%%%%%%%%%%%%%%%%%%%%%%%%%%%%%%BeginTextBellow%%%%%%%%%%%%%%%%%%%%%%%%%%%%%%%%%%%%%%%

\section{Background}

Since the late 1930's, it has been known that valuation theory and the resolution of singularities are closely connected. In particular, it was found that the existence of a local uniformization of a valuation in essence provides a way to resolve a singularity locally.

Recall the local uniformization question asks: If we fix a ground field $k$, a field extension $K$, and a one-dimensional valuation $\nu$ on $K$, does there exists a complete model $X$ of $K$ such that the center of $\nu$ on $X$ is non-singular?  In 1940, Zariski showed that if $K$ is of dimension two and $k$ is algebraically closed and of characteristic zero, then such a model always exists \cite{zariski_local_1940}.  Utilizing this result, Zariski proved that a resolution of singularities for 3-folds is possible in characteristic zero \cite{zariski_compactness_1944}, \cite{zariski_reduction_1944}.  Since then this method of local uniformization has been extended and used to resolve the singularities of any varieties of dimension less than or equal to three over a field of arbitrary characteristic \cite{abhyankar_local_1956}, \cite{abhyankar_resolution_1966}, \cite{cossart_resolution_2008}, \cite{cossart_resolution_2009}, \cite{cutkosky_resolution_2009}. A nice treatment of the theory of resolutions of singularities can be found in \cite{cutkosky_resolution_2004} and \cite{kollar_lectures_2007}. Of course Hironaka's famous theorem on resolution of singularities for varieties over a field of characteristic zero does not use valuation theory, but it is possible that an eventual solution to the characteristic $p$ case will, since most significant progress in characteristic $p$ uses valuation theory \cite{hironaka_resolution_1964-1}, \cite{hironaka_resolution_1964}. 

There has been interesting work exploring how continued fractions are related to resolving various singularities.  For example, it has been shown that the minimal resolutions of toric singularities can be calculated by Hirzebruch-Jung continued fractions \cite{cox_toric_2011}.  A nice overview of the numerous ways continued fractions have come up in the studies of singularities, including the case of plane curve singularities, is found in \cite{popescu-pampu_geometry_2005}.   

There has been work connecting continued fractions and valuation theory.  Recall that given a valuation $\nu$ on $k[[x,y]]$, we can associate $\nu$ with a sequence of infinitely nearby points.  In fact, there is a bijection between the set of valuations up to equivalence and the set of infinitely nearby points \cite{favre_valuative_2004}, \cite{zariski_commutative_1960}.  If $\nu$ is a monomial valuation, it turns out this sequence of infinitely nearby points is given by the continued fraction of $\nu(x)/\nu(y)$ \cite{favre_valuative_2004}, \cite{spivakovsky_valuations_1990}.

Many of our results are, in fact, scattered throughout the literature in some form. Specifically see \cite{favre_valuative_2004}, \cite{popescu-pampu_geometry_2005}, \cite{spivakovsky_valuations_1990}.  However, we have not found these results presented in a unified, cohesive, and elementary fashion. Additionally, many of our proofs of these results differ from those found in the existing literature. In particular, by sacrificing some level of generality, the proofs we present are more elementary and completely self-contained. 

%%%%%%%%s%%%%%%%%%%%%%%%%%%%%%%%%%%%%%%%%%%%%

\section{Continued Fractions}\label{section: CF}

Let us recall a few basic definitions and facts regarding continued fractions and fix notations.  A more complete treatment of the theory of continued fractions may be found in \cite{collins_continued_????} and Chapter 7 of \cite{strayer_elementary_2002}.

Fix $d_0 \in \Z$ and $d_1, d_2, \ldots \in \Z^+$. A \textbf{finite continued fraction} is an expression of the form
$$[d_0 ; d_1, d_2, \ldots, d_k] = d_0 + \cfrac{1}{d_1+\cfrac{1}{d_2+\cfrac{1}{\ddots +\cfrac{1}{d_k}}}}.$$ 
We can generalize this notion by defining an \textbf{infinite continued fraction} as the limit of a sequence of finite continued fractions, which we signal with the notation $$[d_0 ; d_1, d_2, \ldots] := \lim_{k \to \infty} [d_0 ; d_1, d_2, \ldots, d_k].$$
Due to the following well-known fact, which we leave to the reader to check, we really need not worry about the convergence of infinite continued fractions. 

\begin{prop}Given a sequence of integers $\{d_{i}\}_{i=0}^{\infty}$ where the $d_{i}$ positive for all $i\neq 0$ the continued fraction $[d_{0}; d_{1}, d_{2},\ldots]$ exists.\end{prop}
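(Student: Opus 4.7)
The plan is to show that the sequence of finite continued fractions $C_k := [d_0; d_1, \ldots, d_k]$ is Cauchy (in fact convergent) by exploiting the standard three-term recurrence for its numerators and denominators.

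First I would introduce the \emph{convergents} $p_k/q_k = C_k$ via the recurrences
\[
p_{-1} = 1, \quad p_0 = d_0, \quad p_k = d_k p_{k-1} + p_{k-2},
\]
\[
q_{-1} = 0, \quad q_0 = 1, \quad q_k = d_k q_{k-1} + q_{k-2},
\]
and verify (by a straightforward induction on $k$, or by an inductive argument replacing $d_k$ by $d_k + 1/d_{k+1}$) that $C_k = p_k/q_k$ for all $k \geq 0$. A parallel induction yields the crossed-product identity
\[
p_k q_{k-1} - p_{k-1} q_k = (-1)^{k-1}, \qquad k \geq 0,
\]
which upon dividing by $q_k q_{k-1}$ gives the telescoping formula
\[
C_k - C_{k-1} = \frac{(-1)^{k-1}}{q_k q_{k-1}}, \qquad k \geq 1.
\]

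Next I would control the growth of the denominators. Since $d_k \geq 1$ for $k \geq 1$, the recursion $q_k = d_k q_{k-1} + q_{k-2}$ immediately gives $q_k \geq q_{k-1} + q_{k-2}$ for $k \geq 2$, so $\{q_k\}$ dominates the Fibonacci sequence and in particular $q_k \to \infty$. Consequently the differences $|C_k - C_{k-1}| = 1/(q_k q_{k-1})$ tend to $0$ very rapidly.

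To upgrade Cauchy-in-differences to genuine convergence, I would use the alternating-sign structure: combining two consecutive applications of the telescoping formula shows
\[
C_{k+2} - C_k = \frac{(-1)^k\, d_{k+2}}{q_{k+2} q_k},
\]
so the even-indexed subsequence $\{C_{2m}\}$ is strictly increasing and the odd-indexed subsequence $\{C_{2m+1}\}$ is strictly decreasing, while every even convergent lies below every odd convergent. Both monotone subsequences are bounded, hence convergent, and the fact that $|C_{k}-C_{k-1}| = 1/(q_k q_{k-1}) \to 0$ forces their limits to agree. Therefore $\lim_{k\to\infty} C_k$ exists, which is the claimed existence of $[d_0; d_1, d_2, \ldots]$. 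The only mildly subtle step is the interleaving/monotonicity argument in this final paragraph; everything else is bookkeeping with the recurrence.
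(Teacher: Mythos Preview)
Your argument is correct and is the standard textbook proof of this fact: set up the three-term recurrence for the convergents, derive the determinant identity and the resulting alternating telescoping differences, and finish via the nested monotone subsequences. The paper itself does not supply a proof of this proposition at all; it simply labels the statement as well-known and leaves the verification to the reader. So there is nothing to compare against---you have filled in exactly the kind of argument the authors presumably had in mind when they punted.
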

%The finite continued fractions in question are called the \textbf{convergents} of $[d_0 ; d_1, d_2, \ldots]$.  ]

Every real number $r$ can be expressed as a continued fraction, which is essentially unique.  To see why this is the case, we provide the following algorithm for constructing a continued fraction for $r$.  Setting $r_{0}=r$ and $d_{0}=\lfloor r_{0}\rfloor$, where $\lfloor r_{0}\rfloor$ is the integer part of $r_{0}$, we then define $d_{n}$ and $r_{n}$ recursively by
\begin{equation*}d_{n}=\lfloor r_{n}\rfloor \quad \quad \text{and}\quad \quad r_{n}=\frac{1}{r_{n-1}-d_{n-1}},\end{equation*}
so long as $r_{n-1}-d_{n-1}\neq0$.

If $r_{n}-d_{n}=0$ for some $n$, we set $k=n$, and then $r=[d_{0}; d_{1}, d_{2},\ldots, d_{n}]$.  Otherwise, this process does not terminate, and so $r$ is the infinite continued fraction $[d_0 ; d_1, d_2, \ldots]$.  Of course, we must be careful that the limit of the convergents actually exists and is $r$, but this is a rather simple exercise we shall leave to the reader.  If $r$ has an infinite continued fraction expansion, then by induction we find that this expansion is in fact unique.  If, however, the continued fraction expansion of $r$ is finite, then the expansion is unique up to replacing $d_k$ with the pair $d_{k}-1, 1$ in the bracket notation, i.e., $[0; 1, 2] = [0; 1 , 1, 1]$.

In illustration, consider the following example. 

\begin{exm}\label{ExmConFrac}Let us compute the continued fraction expansion of $3/2$.  Applying the above algorithm, we have that 
\begin{align*}
3 &= 1\cdot 2+1\\
2 &= 2\cdot1+0,
\end{align*}
and so $3/2=[1;2]=[1;1,1]$.  By contrast, the continued fraction expansion of the irrational number $\frac{1}{\pi}$ is 
$$\frac{1}{\pi} = [0; 3, 7, 15, 1, 292, 1, 1, 1, 2, 1, 3, 1, 14, 2, 1, 1, 2, 2, 2, 2,1,84...].$$ 
\end{exm}

\begin{prop}
A real number $r$ has a finite continued fraction expansion if and only if $r$ is rational.
\end{prop}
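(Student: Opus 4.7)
The plan is to prove both implications separately, with the reverse direction being the more substantive one.

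For the forward direction, I would proceed by induction on the length $k$ of the finite continued fraction expansion $[d_0; d_1, \ldots, d_k]$. The base case $k=0$ is immediate since $[d_0] = d_0 \in \Z \subset \Q$. For the inductive step, I would use the identity
$$[d_0; d_1, d_2, \ldots, d_k] = d_0 + \frac{1}{[d_1; d_2, \ldots, d_k]},$$
noting that $[d_1; d_2, \ldots, d_k]$ is rational by the inductive hypothesis and strictly positive (since $d_1, \ldots, d_k \in \Z^+$), hence nonzero. The sum and reciprocal of rationals is rational.

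For the reverse direction, I would show that the recursive algorithm described in the text terminates whenever $r \in \Q$, which is essentially a restatement of the termination of the Euclidean algorithm. Write $r = p/q$ in lowest terms with $q > 0$. By Euclidean division, $p = q d_0 + s$ with $0 \le s < q$, and $d_0 = \lfloor r \rfloor$. Then
$$r_0 - d_0 = \frac{p - q d_0}{q} = \frac{s}{q}.$$
If $s = 0$ the algorithm terminates at step zero. Otherwise $r_1 = q/s$, and since $\gcd(p,q)=1$ implies $\gcd(q,s) = \gcd(p,q) = 1$, the fraction $q/s$ is already in lowest terms, with a strictly smaller positive denominator than $r_0$.

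I would then set up an induction on the denominator: assuming every rational with denominator strictly less than $q$ has a finite continued fraction expansion, the expansion of $p/q$ is $[d_0; d_1, \ldots, d_k]$ where $[d_1; d_2, \ldots, d_k]$ is the (finite) expansion of $r_1 = q/s$ guaranteed by the inductive hypothesis. The base case $q = 1$ gives $r \in \Z$ with expansion $[r]$. The main conceptual point, and only real obstacle, is verifying that successive denominators strictly decrease; once that is in hand the termination is automatic from the well-ordering of $\Z^+$. No step requires any genuinely new idea beyond recognizing that the algorithm in the excerpt is the Euclidean algorithm in disguise.
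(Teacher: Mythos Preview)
Your proposal is correct and follows essentially the same route as the paper: the forward direction is a straightforward closure-of-$\Q$ observation (the paper compresses your induction into the single remark that $[d_0;d_1,\ldots,d_k]\in\Z(1/d_1,\ldots,1/d_k)\subseteq\Q$), and the reverse direction is the termination of the Euclidean algorithm, which the paper phrases by tracking the strictly decreasing sequence of remainders $p_i$ rather than by strong induction on the reduced denominator. Your extra step of keeping the fraction in lowest terms to ensure $\gcd(q,s)=1$ is harmless but unnecessary---the strict decrease of remainders alone forces termination.
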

\begin{proof}
If $r$ equals $[d_{0}; d_{1},d_{2},\ldots,d_{k}]$ then $r \in \Z \left(\frac{1}{d_1}, \ldots , \frac{1}{d_k}\right) \subseteq \Q$. 
Conversely, let $r = a/b \in \Q$. If $b=1$ then $a/b$ can be represented as a continued fraction by $[a]$.  Therefore, suppose $b\neq1$.  By the division algorithm, there exist integers $q_{0}$ and $p_{0}$ such that $a = q_0 b + p_0$ where $0 \le p_0 < b$.  Notice that $\lfloor a/b\rfloor$ equals $q_{0}$ and so $d_{0}=q_{0}$.  Following the algorithm described above, and utilizing the fact that $q_0=a/b-p_{0}/b$, we have that 
$$ d_{1}=\left\lfloor \frac{1}{r_{0}-d_{0}}\right\rfloor= \left\lfloor \frac{1}{\frac{a}{b}-q_{0}}\right\rfloor = \left\lfloor \frac{b}{p_{0}}\right\rfloor.$$
Applying the division algorithm, there exist $p_{1},q_{1}\in \Z$ such that $b=q_{1}p_{0}+p_{1}$ where $0\leq p_{1}<p_{0}$.  Therefore, we find that $d_{2}$ is equal to 
$$d_{2}=\left\lfloor \frac{1}{r_{1}-d_{1}}\right\rfloor= \left\lfloor \frac{1}{\frac{b}{p_0}-q_{1}}\right\rfloor = \left\lfloor \frac{p_0}{p_{1}}\right\rfloor.$$
By induction, we have that 
$$ d_{n}=\left\lfloor \frac{p_{n-2}}{p_{n-1}}\right\rfloor ,$$
where $a = p_{-2}, b = p_{-1}$, and $p_{n-3}=q_{n-2}p_{n-2}+p_{n-1}$ with $0\leq p_{n-1}<p_{n-2}$.  Therefore, the $p_{i}$'s form a monotone strictly decreasing sequence of non-negative integers.  So for some $k$, we have $p_{k}=0$ implying that $d_{k+2}$ must equal zero.  Thus, the continued fraction of $a/b$ is finite.
\end{proof}

\begin{rem}
The argument for the reverse direction of the proposition is essentially an application of the Euclidean algorithm.  In particular, if $a$ and $b$ are positive integers and $q_{0}, q_{1}, \ldots, q_{k}$ are the quotients found when the Euclidean algorithm is used to calculate the greatest common divisor of $a$ and $b$ such that $a=q_{0}b+r_{0}$, then the continued fraction expansion of $a/b$ is $[q_{0}; q_{1}, q_{2},\ldots, q_{k}]$.
\end{rem}

%%%%%%%%s%%%%%%%%%%%%%%%%%%%%%%%%%%%%%%%%%%%%

\section{Classifying $\R$-valued Monomial Valuations on $k(x,y)$}
In this section, our goal is to provide an explicit description of the valuation ring $R_{\nu}$ of a given $\R$-valued monomial valuation $\nu$ on a field extension $K=k(x,y)$ of a field $k$. We give an answer for when $\nu$ is rational, and provide a way of thinking about general monomial valuations by introducing the concept of the valuation tree.\footnote{This is not the same as the valuative tree defined in \cite{favre_valuative_2004}.}  Additionally, by using these valuation trees, we establish a correspondence between $R_{\nu}$ and the continued fraction expansion of $\nu(x)/\nu(y)$.  We first review the basics of valuation theory.  A more complete treatment of valuation theory may be found in \cite{matsumura_commutative_1989}, \cite{zariski_commutative_1960}. \\

Let $\Gamma$ be a totally ordered abelian group. Then a $\Gamma$-valued valuation is defined as follows:
\begin{defn}\label{DefVal}
Fix a ground field $k$ and a field extension $K$. A valuation on $K$ with respect to $k$, or $k$-valuation, is a map $\nu: K \rightarrow \Gamma \cup \{\infty\}$ satisfying:
\begin{itemize}
\item{The restriction of $\nu$ to $K^{\times}$ is a group homomorphism $K^{\times} \rightarrow \Gamma$}
\item{ $\nu(\lambda)=0,  \forall \lambda \in k^{\times}$}
\item $\nu(f)=\infty$ if and only if $f=0$
\item{ $\nu(f+g) \geq \min \{\nu(f), \nu(g) \}, \forall f, g \in K$.}
\end{itemize}
\end{defn}

We focus on valuations on the field $k(x,y)$ with respect to the subfield $k$.  Additionally, unless otherwise stated we shall take $\Gamma=\R$.  Note that for any valuation $\nu$ on $k(x,y)$, 
$$\nu(x^iy^j)=i\nu(x)+j\nu(y),$$ 
since $\nu$ is a group homomorphism. This means that the value of any monomial in $k(x,y)$ is determined solely by $\nu(x)$ and $\nu(y)$.  We are particularly interested in valuations in which the value of any polynomial is determined by the values of $x$ and $y$. These are called monomial valuations \cite{favre_valuative_2004}.

\begin{defn}
The monomial valuation, $\nu$, on $k(x,y)$ is defined by $\nu(x)=a, \nu(y)=b$, where $a, b \in \R_{\geq 0}$, and 
\[ \nu \left( \sum_{i,j} \lambda_{ij}x^iy^j \right)= \min_{i,j}\{ ia+jb; \lambda_{ij} \neq 0 \}.\]
\end{defn}

Note that since $\nu(fg^{-1})=\nu(f)-\nu(g)$, for $f, g \in k[x,y]$, defining $\nu$ for polynomials $\sum \lambda_{ij}x^iy^j$ also determines $\nu$ for rational expressions.  Thus, monomial valuations on $k(x,y)$ are determined solely by the values of $x$ and $y$.

By changing the values of $\nu(x)$ and $\nu(y)$ in the monomial valuation we obtain different valuations, however, some valuations are essentially the same.  For example, strictly speaking the monomial valuations determined by $\nu_{1}(x)=1$, $\nu_{1}(y)=2$ and $\nu_{2}(x)=.5$, $\nu_{2}(y)=1$ are different, but for any $f\in k(x,y)$ we have the relation $\nu_{1}(f)=2\nu_{2}(f)$. These valuations are essentially the same. Formally:

\begin{defn} Two valuations $\nu_1$ and $\nu_2$ on $k(x,y)$ are said to be equivalent if there exists $\lambda \in \R_{>0}$ such that $\nu_1(f)=\lambda \nu_2(f), \forall f \in k(x,y)$.
\end{defn}

This allows us to make certain standardizations. For this paper, we will consider $\R$-valued valuations $\nu$ on $k(x,y)$ such that $\nu(x)>\nu(y)>0$. If a valuation does not have this property, we can relabel $x$ and $y$ or multiply by a scalar to obtain an equivalent valuation that does have this property. Furthermore, we may also scale by $1/\nu(y)$ so as to assume $\nu(x)>\nu(y)>0$ and $\nu(y)=1$ if we like. 

\begin{defn}
Given a $k$-valuation $\nu$ on $k(x,y)$, the valuation ring is given by 
$$R_{\nu} := \{ f \in k(x,y) : \nu(f) \geq 0 \}.$$
\end{defn}

It follows directly from the axioms of a valuation that $R_{\nu} $ is a ring. This ring $R_{\nu}$ is a sub-$k$-algebra of $k(x,y)$, and is a local ring with maximal ideal 
$$\m_{\nu}:=\{f \in k(x,y) : \nu(f)>0\}.$$

\begin{prop} Two valuations $\nu_1, \nu_2$ on $k(x,y)$ are equivalent if and only if $R_{\nu_1}=R_{\nu_2}$.\end{prop}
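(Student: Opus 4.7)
The plan is to handle the two directions of the biconditional separately, with the forward direction being immediate from the definitions and the reverse direction requiring a density argument.

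For the forward direction, if $\nu_1 = \lambda \nu_2$ for some $\lambda \in \R_{>0}$, then for any $f \in k(x,y)$ we have $\nu_1(f) \geq 0$ if and only if $\lambda \nu_2(f) \geq 0$ if and only if $\nu_2(f) \geq 0$, since multiplication by a positive scalar preserves the sign. This immediately gives $R_{\nu_1} = R_{\nu_2}$.

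For the reverse direction, assume $R_{\nu_1} = R_{\nu_2}$. I would first observe that the maximal ideal is recoverable purely from the ring: $f \in \m_{\nu_i}$ if and only if either $f = 0$ or $f^{-1} \notin R_{\nu_i}$. Consequently $R_{\nu_1} = R_{\nu_2}$ forces $\m_{\nu_1} = \m_{\nu_2}$, and similarly the unit group $R_{\nu_i} \setminus \m_{\nu_i}$ is common to both valuations; in particular $\nu_1(f) = 0$ iff $\nu_2(f) = 0$, and $\nu_1(f) > 0$ iff $\nu_2(f) > 0$.

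Now fix any $g \in k(x,y)^\times$ with $\nu_1(g) > 0$ (equivalently $\nu_2(g) > 0$), and set $\lambda := \nu_1(g)/\nu_2(g) \in \R_{>0}$. I would show that $\nu_1(f) = \lambda \nu_2(f)$ for every $f \in k(x,y)^\times$. The key is that for any positive integer $n$ and integer $m$,
\begin{equation*}
\nu_i(f^n g^{-m}) > 0 \iff n\,\nu_i(f) - m\,\nu_i(g) > 0 \iff \frac{\nu_i(f)}{\nu_i(g)} > \frac{m}{n},
\end{equation*}
where the last step uses $\nu_i(g) > 0$. Since $\m_{\nu_1} = \m_{\nu_2}$, the membership $f^n g^{-m} \in \m_{\nu_i}$ is independent of $i$, so the set of rationals $m/n$ strictly below $\nu_i(f)/\nu_i(g)$ is the same for $i = 1,2$. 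By the density of $\Q$ in $\R$ (a real number is determined by the set of rationals below it), this forces $\nu_1(f)/\nu_1(g) = \nu_2(f)/\nu_2(g)$, which rearranges to $\nu_1(f) = \lambda \nu_2(f)$. The cases $\nu_2(f) = 0$ and $f = 0$ are handled separately: the former gives $\nu_1(f) = 0$ by the preliminary observation, and the latter is trivial.

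The only real subtlety is step three — making sure the density argument correctly recovers the ratio across both positive and negative values — but once one is careful about signs and about the trivial cases, this is routine. The hardest conceptual step is simply realizing that both the positivity set $\m_\nu$ and the vanishing set $R_\nu^\times$ are reconstructible from $R_\nu$, which is what lets us compare $\nu_1$ and $\nu_2$ purely through ring-theoretic membership conditions.
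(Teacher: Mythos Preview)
The paper does not actually prove this proposition: it is stated as a standard fact, followed immediately by a reference to \cite{matsumura_commutative_1989} for ``additional characterizations and properties'' of valuation rings. So there is no paper proof to compare against; your argument stands on its own and is correct for $\R$-valued valuations, which is the setting the paper has fixed.

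Two small remarks. First, you silently assume there exists some $g$ with $\nu_1(g)>0$; strictly speaking you should dispose of the trivial-valuation case (where $R_{\nu_i}=k(x,y)$ and both valuations vanish identically on $k(x,y)^\times$) before fixing such a $g$. Second, your density argument genuinely uses that the value group sits inside $\R$; the statement is true for arbitrary ordered value groups, but the proof in that generality (as in Matsumura) instead shows directly that $\nu_1(f)\le\nu_1(g)\iff fg^{-1}\in R_{\nu_1}=R_{\nu_2}\iff\nu_2(f)\le\nu_2(g)$, giving an order isomorphism of value groups rather than a real scalar $\lambda$. Since the paper restricts to $\Gamma=\R$, your approach is perfectly adequate here.
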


Valuation rings have many additional characterizations and properties. For a more complete treatment see \cite{matsumura_commutative_1989}.  This proposition gives us a nice way to approach the problem of classifying the monomial valuations on $k(x,y)$.  Namely, to understand monomial valuations on $k(x, y)$ it suffices to determine the valuation rings.  

\subsection{Valuation Trees}\label{Val-Tree}
While the definition of a valuation ring gives an abstract definition of these objects, we would like to be able to provide a more explicit description of them. Given a valuation $\nu$ on $k(x,y)$ generated by $\nu(x)=a, \nu(y)=b$ where $a,b\in \R$, what is $R_{\nu}$? To find this ring, we need to determine the largest subring $R \subset k(x,y)$ whose elements all have non-negative value.  Heuristically, one might try and find the valuation ring by carefully adding fractions of elements in $k[x,y]$ such that elements of the resulting ring still have non-negative value.

To make this heuristic process formal, we introduce the notion of the valuation tree for $k(x,y)$. The valuation tree, $T$, is a directed graph whose vertices are certain subrings of $k(x,y)$. The ring $k[x,y]$ is a vertex of $T$, and then we define the vertex set of $T$ recursively.  In particular, if $k[f,g]$ is a vertex of $T$ then $k[f, g/f]$ and $k[g, f/g]$ are also vertices of $T$, and there is an edge going from $k[f,g]$ to each of $k[f, g/f]$ and $k[g, f/g]$. 
This definition gives us the following infinite tree:

\begin{figure}[H]
\begin{center}
 	\begin{tikzpicture}
	[scale=.75,colorstyle/.style={circle, draw=black!100,fill=black!100, thick,
	inner sep=0pt, minimum size=2 mm}]
	\node at (0,0){$k[x,y]$};
	\node at (5,3){$k[y, x/y]$};
	\node at (5,-3){$k[x, y/x]$};
	
	\node at (10, 4.5){$k[y, x/y^{2}]$};
	\node at (10, 1.5){$k[x/y, y^{2}/x]$};
	\node at (10, -1.5){$k[y/x, x^{2}/y]$};
	\node at (10, -4.5){$k[x, y/x^{2}]$};
	
	\node at (16.4, 2.25){$k[y^2/x, x^2/y^3]\cdots$};
	\node at (16.3, .75){$k[x/y, y^3/x^2]\cdots$};
	\node at (16.3, 3.5){$k[x/y^2, y^3/x]\cdots$};
	\node at (16, 5){$k[y,x/y^3]\cdots$};
	\node at (16.25,-.5){$k[y/x,x^3/y^2]\cdots$};
	\node at (16.3,-2.5){$k[x^2/y, y^2/x^3] \cdots$};
	\node at (16.25, -3.75){$k[y/x^2, x^3/y] \cdots$};
	\node at (16, -5.5){$k[x,y/x^3]\cdots$};
	
	\draw[->,thick](.8,.1)--(3.8,3);
	\draw[->,thick](.8,-.1)--(3.8,-3);
	\draw[->,thick](6.1,-2.9)--(8.5, -1.5);
	\draw[->,thick](6.1,2.9)--(8.4, 1.5);
	\draw[->,thick](6.1,3.1)--(8.65,4.5);
	\draw[->,thick](6.1,-3.1)--(8.4,-4.5);
	
	\draw[->,thick](11.5,1.6)--(14.3,2.25);
	\draw[->,thick](11.5,1.4)--(14.2,.75);
	\draw[->,thick](11.5,4.5)--(14.2,5);
	\draw[->,thick](11.5,4.50)--(14.2,3.5);
	\draw[->,thick](11.5,-1.5)--(14.2,-.5);
	\draw[->,thick](11.5,-1.5)--(14.2,-2.5);
	\draw[->,thick](11.5,-4.5)--(14.2,-3.75);
	\draw[->,thick](11.5,-4.5)--(14.2,-5.5);
	
	%\draw[dashed](17.6,2.25)--(18.5,2.75);
	%\draw[dashed](11.2,1.5)--(12.5,1);
	
	%\draw[dashed](11.2,-1.5)--(12.5,-1);
	%\draw[dashed](11.2,-1.5)--(12.5,-2);
	%\draw[dashed](11.5,-4.5)--(12.5,-5);
	%\draw[dashed](11.5,-4.5)--(12.48,-4);
	
	\end{tikzpicture}
\end{center}
 \caption{Valuation tree, $T$}
\end{figure}
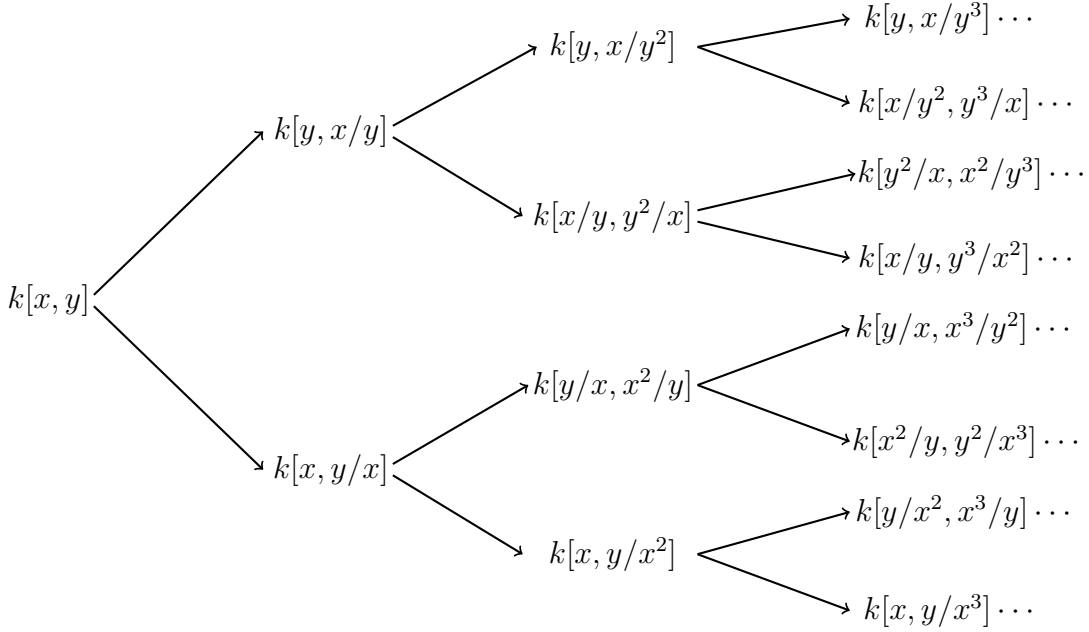
All of the rings in $T$ are subrings of $k(x,y)$. Given a $k$-valuation $\nu$ on $k(x,y)$, certain rings in $T$ will also be subrings of $R_{\nu}$. Determining these subrings will help us determine what $R_{\nu}$ is. To find these subrings, we must introduce the following definition. 

\begin{defn}Let $\nu$ be a $k$-valuation on $k(x,y)$. Then a ring $k[s_{1}, s_{2}, \ldots, s_{n}]\subset k(x,y)$ is positive with respect to $\nu$, or $\nu$-positive, if $\nu(s_{i})>0$ for all $s_{i}$.\end{defn}

Now that we have this definition, we can define the positive path of $T$ for a valuation $\nu$.

\begin{defn}
Given a monomial $k$-valuation on $k(x,y)$, $P_{\nu}$ is the subgraph of $T$ whose vertices are all positive with respect to $\nu$.
\end{defn}

If we fix a $k$-valuation, $\nu$, on $k(x,y)$, then by our convention $\nu(x)>\nu(x)>0$, so $k[x,y]$ is positive and therefore is in $P_{\nu}$. If $k[s,t]$ is a positive vertex in $T$ such that $\nu(s)\neq\nu(t)$ then either $\nu(s/t)<0$ or $\nu(t/s)<0$ and so only one of $k[s,t/s]$ or $k[t,s/t]$ is positive with respect to $\nu$.  Thus, $P_{\nu}$ is not just any subgraph, but is in fact a path within $T$.

Note that each ring in $P_{\nu}$ contains the previous ring in the path. So we obtain a chain of increasingly larger and larger rings whose elements all have positive value under $\nu$. In order to see this concretely, we consider the following example.

\begin{exm}\label{Val23}If $\nu$ is the monomial valuation on $k(x,y)$ such that $\nu(x)=3$ and $\nu(y)=2$, recall that $k[x,y]$ is $\nu$-positive. Looking at the next two vertices in the tree, we see that $k[x, y/x]$ is not positive while $k[y,x/y]$ is positive. So $k[y,x/y]$ is in $P_{\nu}$. Since $\nu(x/y^2)=-1$, $k[y, x/y^{2}]$ is not in $P_{\nu}$, and since $\nu(y^2/x)=1$, $k[x/y, y^{2}/x] \in P_{\nu}$, and this is where the path ends since neither $k[x/y,y^3/x^2]$ nor $k[y^2/x, x^2/y^3]$ lies in $P_{\nu}$, as $\nu(y^3/x^2)=\nu(x^2/y^3)=0$. Therefore, $P_{\nu}$ is the bolded path in the figure below: 
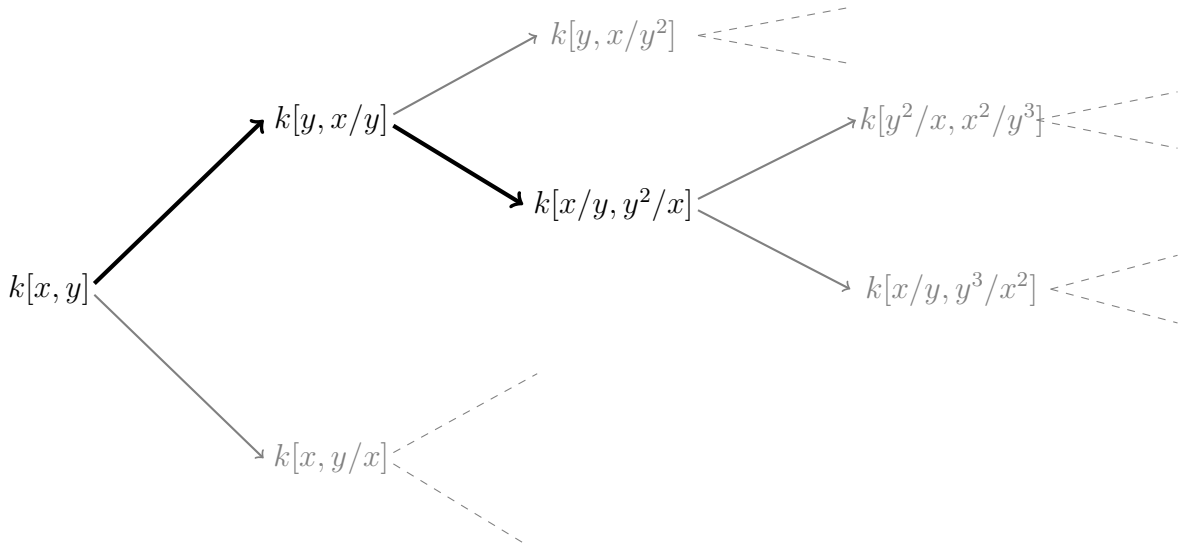
\begin{figure}[H]
\begin{center}
 	\begin{tikzpicture}
	[scale=.75,colorstyle/.style={circle, draw=black!100,fill=black!100, thick,
	inner sep=0pt, minimum size=2 mm}]
	\node at (0,0){$k[x,y]$};
	\node at (5,3){$k[y, x/y]$};
	\node[gray] at (5,-3){$k[x, y/x]$};
	
	\node[gray] at (10, 4.5){$k[y, x/y^{2}]$};
	\node at (10, 1.5){$k[x/y, y^{2}/x]$};
	%\node at (10, -1.5){$\C[x, y/x^{2}]$};
	%\node at (10, -4.5){$\C[y/x, x^{2}/y]$};
	
	\node[gray] at (16, 3){$k[y^2/x, x^2/y^3]$};
	\node[gray] at (16, 0){$k[x/y, y^3/x^2]$};
	
	\draw[->,ultra thick](.8,.1)--(3.8,3);
	\draw[->,thick,gray](.8,-.1)--(3.8,-3);
	\draw[dashed,gray](6.1,-2.9)--(8.65, -1.5);
	\draw[->,ultra thick](6.1,2.9)--(8.4, 1.5);
	\draw[->,thick, gray](6.1,3.1)--(8.65,4.5);
	\draw[dashed,gray](6.1,-3.1)--(8.4,-4.5);
	
	\draw[->,thick,gray](11.5,1.6)--(14.3,3);
	\draw[->,thick,gray](11.5,1.4)--(14.2,0);
	\draw[dashed,gray](11.5,4.5)--(14.2,5);
	\draw[dashed,gray](11.5,4.5)--(14.2,4);
	
	\draw[dashed,gray](17.5,3)--(20,3.5);
	\draw[dashed,gray](17.5,3)--(20,2.5);
	\draw[dashed,gray](17.75,0)--(20,-.6);
	\draw[dashed,gray](17.75,0)--(20,.6);
	
	%\draw[dashed](11.2,1.5)--(12.5,2);
	%\draw[dashed](11.2,1.5)--(12.5,1);
	
	%\draw[dashed](11.2,-1.5)--(12.5,-1);
	%\draw[dashed](11.2,-1.5)--(12.5,-2);
	%\draw[dashed](11.5,-4.5)--(12.5,-5);
	%\draw[dashed](11.5,-4.5)--(12.48,-4);

	\end{tikzpicture}
\end{center}
 \caption{$P_{\nu}$ for monomial valuation given by $\nu(x)=3$ and $\nu(y)=2$.}
\end{figure}
\end{exm}
While in the above example $P_{\nu}$ is a finite sub-graph of $T$, this is not true in general, and later in this section we shall establish conditions needed on $\nu$ for $P_{\nu}$ to be finite.
 
Thus, we now have a way to associate each $\R$-valued monomial valuation to a path within $T$. Note, however, that not every path within $T$ corresponds to an $\R$-valued monomial valuation.  Consider, for example, the infinite path whose vertices are of the form $k[y, x/y^{t}]$ for all $t\in \N$.  If this path were to correspond to an $\R$-valued monomial valuation, then $\nu(x/y^{t})>0$ for all $t\in \N$, meaning $\nu(x)=\infty$, which cannot occur by our definition of valuation.  So this path does not correspond to an $\R$-valued monomial valuation.  However, this path does correspond to a $\Z\oplus\Z$-valued valuation on $k(x,y)$. See Section 3.4 and especially Example 3.16.

\subsection{Integral and Rational Monomial Valuations}
 
How does $P_{\nu}$ help us find the valuation ring?  Appealing again to the above heuristic, and since edges are, in essence, induced by inclusions, the terminal object in $P_{\nu}$, if it exists, is the largest subring in $k(x,y)$ positive with respect to $\nu$.  Thus, we would hope that this terminal ring in $P_{\nu}$ is in fact the valuation ring associated to $\nu$.  However, looking back at Example \ref{Val23}, we see that there are two issues with this hope.  First, while $k[x/y, y^{2}/x]$ is the terminal object in $P_{\nu}$, both $k[x/y, y^{3}/x^{2}]$ and $k[y^{2}/x, x^{2}/y^{3}]$ contain the terminal object, and neither contains an element with negative value.  Second, recall that $R_{\nu}$ is a local ring, while $k[x/y, y^{2}/x]$ is not local.

It turns out these issues are not hard to overcome.  In fact, in Example \ref{Val23} the valuation ring associated to the monomial valuation given by $\nu(x)=3$ and $\nu(y)=2$ is 
 $$k\left[\frac{x}{y}, \frac{y^{3}}{x^{2}}\right]_{\p_{1}}= k\left[\frac{y^{2}}{x}, \frac{x^{2}}{y^{3}}\right]_{\p_{2}},$$
 where $\p_{1}$ is the ideal generated by $x/y$ and $\p_{2}$ is the ideal generated by $y^{2}/x$.  
 
  \begin{prop} Let $\nu$ be the monomial valuation on $k(x,y)$ such that $\nu(x)=a$ and $\nu(y)=b$, where $a$ and $b$ are positive coprime integers such that $a > b$.  Then the valuation ring $R_{\nu}$ of $\nu$ is  $k[\frac{y^{a}}{x^{b}}, \frac{x^{p}}{y^{q}}]_{\p}$ where $\p=(\frac{x^{p}}{y^{q}})$ and $p, q\in \Z_{>0}$ such that $pa-qb=1$.
 \end{prop}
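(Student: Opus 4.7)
The plan is to establish both inclusions $R_{\p} \subseteq R_{\nu}$ and $R_{\nu} \subseteq R_{\p}$, where I write $R := k[y^{a}/x^{b},\, x^{p}/y^{q}]$. As preliminaries I would compute $\nu(y^{a}/x^{b}) = ab - ba = 0$ and $\nu(x^{p}/y^{q}) = pa - qb = 1$, so both generators lie in $R_{\nu}$, the first as a unit and the second as an element of value one. Because the integer matrix with rows $(-b,p)$ and $(a,-q)$ has determinant $bq - ap = -1$, the linear map $(i,j) \mapsto (-bi+pj,\, ai-qj)$ is a bijection $\Z^{2} \to \Z^{2}$, so distinct monomials in the two generators yield distinct Laurent monomials in $x,y$. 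It follows that $R$ is a polynomial ring in two algebraically independent variables, that $\p = (x^{p}/y^{q})$ is a prime ideal of $R$, and that $R/\p \cong k[y^{a}/x^{b}]$.

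For $R_{\p} \subseteq R_{\nu}$, any $f \in R$ has $\nu(f) \geq 0$, being a polynomial in elements of non-negative value. For $g \in R \setminus \p$ I would write $g = g_{0} + (x^{p}/y^{q})\tilde g$ with $0 \neq g_{0} \in k[y^{a}/x^{b}]$; a short direct computation gives $\nu(g_{0}) = 0$ (clearing a common denominator $x^{bN}$, the polynomial $x^{bN} g_{0}$ has pairwise distinct monomial terms all of value $abN$, so $\nu(x^{bN} g_{0}) = abN$ and $\nu(g_{0}) = 0$), while $\nu\bigl((x^{p}/y^{q})\tilde g\bigr) \geq 1$. Thus $\nu(g) = 0$, so $g$ is a unit in $R_{\nu}$, which yields the inclusion.

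For $R_{\nu} \subseteq R_{\p}$, I would first handle monomials. Inverting the exponent matrix, every Laurent monomial factors as $x^{m} y^{n} = (y^{a}/x^{b})^{\alpha}(x^{p}/y^{q})^{\beta}$ with $\alpha = qm + pn$ and $\beta = am + bn = \nu(x^{m} y^{n})$. When $\nu(x^{m} y^{n}) \geq 0$ we have $\beta \geq 0$; the monomial lies in $R$ if $\alpha \geq 0$, and otherwise it equals $(x^{p}/y^{q})^{\beta}/(y^{a}/x^{b})^{-\alpha} \in R_{\p}$, since $(y^{a}/x^{b})^{-\alpha} \notin \p$. For a general $f/g \in R_{\nu}$ with $f, g \in k[x,y]$, I would select monomials $M_{f}$ of $f$ and $M_{g}$ of $g$ realizing $\nu(f)$ and $\nu(g)$, and factor
\[
f/g \;=\; (M_{f}/M_{g}) \cdot (f/M_{f}) \cdot (g/M_{g})^{-1}.
\]
The first factor is a monomial of non-negative value, and the second is a $k$-linear combination of such monomials, so both belong to $R_{\p}$ by the monomial case. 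The remaining factor lies in $R_{\p}$ provided $g/M_{g}$ is a unit there.

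The main obstacle is this last point: showing that $g/M_{g}$ is a unit in $R_{\p}$, equivalently that its image in $R_{\p}/\p R_{\p} \cong k(y^{a}/x^{b})$ is nonzero. After multiplying $g/M_{g}$ by a suitably high power of $y^{a}/x^{b}$ to land in $R$, the reduction modulo $\p$ picks out exactly the monomials $x^{i-m_{g}} y^{j-n_{g}}$ of $g/M_{g}$ whose $\beta$-component vanishes, i.e.\ those with $\nu(x^{i} y^{j}) = \nu(g)$. Using the bijection between $(i,j)$- and $(\alpha,\beta)$-indices, these survivors correspond to distinct (possibly negative) powers of $y^{a}/x^{b}$; in particular the term coming from $M_{g}$ itself contributes the nonzero scalar $c_{m_{g} n_{g}}$, and distinct powers of the transcendental element $y^{a}/x^{b}$ cannot cancel. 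Hence the image is nonzero, $g/M_{g}$ is a unit in $R_{\p}$, and the argument is complete.
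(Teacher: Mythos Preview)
Your argument is correct, and the first inclusion $R_{\p} \subseteq R_{\nu}$ matches the paper's proof essentially verbatim. For the reverse inclusion, however, the paper takes a noticeably shorter route. Its key observation is that the identities
\[
x = \left(\frac{x^{p}}{y^{q}}\right)^{a}\left(\frac{y^{a}}{x^{b}}\right)^{q}, \qquad
y = \left(\frac{x^{p}}{y^{q}}\right)^{b}\left(\frac{y^{a}}{x^{b}}\right)^{p}
\]
show $k[x,y] \subseteq R = k[y^{a}/x^{b},\, x^{p}/y^{q}]$ outright. Given $f/g \in R_{\nu}$ with $f,g \in k[x,y]$, one may then view $f,g$ as honest polynomials in the two new generators and factor out the maximal power of $x^{p}/y^{q}$: writing $f = (x^{p}/y^{q})^{n} h$ and $g = (x^{p}/y^{q})^{m} h'$ with $h,h' \notin \p$, one gets $\nu(h)=\nu(h')=0$, hence $n-m = \nu(f/g) \geq 0$, and $f/g = (x^{p}/y^{q})^{n-m} h/h' \in R_{\p}$ directly.

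Your decomposition $f/g = (M_{f}/M_{g})(f/M_{f})(g/M_{g})^{-1}$ reaches the same conclusion, but at the cost of passing through Laurent monomials and then arguing separately that $g/M_{g}$ is a unit in $R_{\p}$ via reduction modulo $\p$. That last step is fine, but it is precisely the work the paper avoids by noting $k[x,y] \subseteq R$. On the other hand, your preliminary remark that the exponent change of variables is a $\Z^{2}$-bijection (so $R$ really is a polynomial ring and $\p$ is prime) makes explicit something the paper leaves implicit, and is a nice addition.
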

 
 \begin{proof}
First, we show that $k[\frac{y^{a}}{x^{b}}, \frac{x^{p}}{y^{q}}]_{\p}$ is contained in $R_{\nu}$.  Suppose that $r\in k[\frac{y^{a}}{x^{b}}, \frac{x^{p}}{y^{q}}]_{\p}$ meaning that $r=f/u$ for some $f\in k[\frac{y^{a}}{x^{b}}, \frac{x^{p}}{y^{q}}]$ and $u\not\in \p$. Let $f_{mn}\left(\frac{y^{a}}{x^b}\right)^m\left(\frac{x^p}{y^q}\right)^n$ be the monomial term in $f$ with smallest value. Therefore, since $\nu$ is the monomial valuation we know that 
\begin{equation*}\nu(r)=\nu(f/u)=\nu(f)-\nu(u)=\nu\left(f_{mn}\left(\frac{y^{a}}{x^{b}}\right)^{m}\left(\frac{x^{p}}{y^{q}}\right)^{n}\right)-\nu(u)=n-\nu(u).\end{equation*}
As $u$ is not in $\p$ we know that it is not divisible by $\frac{x^{p}}{y^{q}}$ meaning that one of its monomial terms is a polynomial in $\frac{y^{a}}{x^{b}}$, and hence $u$ has value zero since $\nu(\frac{y^{at}}{x^{bt}})=0$ for all $t\in \Z_{>0}$.  Thus, we see that the value of $r$ is equal to $n\geq0$ and so $r\in R_{\nu}$, implying the desired inclusion.

In order to see the other inclusion, let $f/g\in R_{\nu}$ where $f, g\in k[x,y]$ meaning that $\nu(f/g)\geq0$. Since $pa-qb=1$, it is the case that: 
$$\left(\frac{x^{p}}{y^{q}}\right)^{a}\left(\frac{y^{a}}{x^{b}}\right)^{q}=x^{pa-qb}=x \quad \quad \text{and} \quad \quad \left(\frac{x^p}{y^q}\right)^b\left(\frac{y^a}{x^{b}}\right)^p=y^{pa-qb}=y.$$
Thus $k[x,y]$ is a subring of $k[\frac{y^{a}}{x^{b}}, \frac{x^{p}}{y^{q}}]$ so we can view $f$ and $g$ as polynomials in $k[\frac{y^{a}}{x^{b}}, \frac{x^{p}}{y^{q}}]$.  Factoring out the maximal possible power of $x^{p}/y^{q}$ from $f$ and $g$ we can write $f$ and $g$ as
\begin{equation*}f=\left(\frac{x^{p}}{y^{q}}\right)^{n}h\quad \quad \text{and}\quad \quad g=\left(\frac{x^{p}}{y^{q}}\right)^{m}h'\end{equation*}
where $h$ and $h'$ are polynomials in $k[\frac{y^{a}}{x^{b}}, \frac{x^{p}}{y^{q}}]$ and  $x^{p}/y^{q}$ does not divide $h$ or $h'$ -- i.e. $h$ and $h'$ are not in $\p$.  Since $x^{p}/y^{q}$ does not divide $h$ or $h'$ there is at least one monomial term in $h$ and $h'$ of the form $\lambda (y^{a}/x^{b})^{k}$ for some $k\in \N$ meaning $\nu(h)=\nu(h')=0$ as $\nu(y^{a}/x^{b})=0$.  Therefore, we have that $\nu(f/g)=\nu((x^{p}/y^{q})^{n-m})=n-m$, and so as $\nu(f/g)$ is greater than or equal to zero, it is the case that $n\geq m$.  Thus, we can represent $f/g$ as 
\begin{equation*}\frac{f}{g}=\left(\frac{x^{p}}{y^{q}}\right)^{n-m}\frac{h}{h'},\end{equation*}
which precisely means that $f/g\in k[\frac{y^{a}}{x^{b}}, \frac{x^{p}}{y^{q}}]_{\p}$, and so $R_{\nu} = k[\frac{y^{a}}{x^{b}}, \frac{x^{p}}{y^{q}}]_{\p}$ where $\p$ is the principal ideal generated by $x^{p}/y^{q}$.
\end{proof}

Having dealt with the case when $\nu$ is an integral monomial valuation on $k(x,y)$ we can now generalize this  result to the case when $\nu$ is equivalent to an integral monomial valuation. If $\nu$ is a real monomial valuation on $k(x,y)$, then $\nu$ is clearly equivalent to a monomial valuation $\nu'$ where $\nu'(x)=\nu(x)/\nu(y)$ and $\nu'(y)=1$. From this we see that such a monomial valuation will be equivalent to an integral monomial valuation if and only if $\nu(x)/\nu(y)$ is a rational number. Since equivalent valuations have isomorphic valuation rings, we thus have the following generalization describing the valuation ring of a monomial valuation equivalent to an integral monomial valuation.

\begin{theorem}Let $\nu$ be a real valued monomial valuation on $k(x,y)$ such that $\nu(x)/\nu(y)=a/b$ is a positive rational number, where $a$ and $b$ are coprime integers greater than 1, and let $p, q\in \Z_{>0}$ such that $pa-qb=1$. Then the valuation ring $R_{\nu}$ of $\nu$ is  $k[\frac{y^{a}}{x^{b}}, \frac{x^{p}}{y^{q}}]_{\p}$ where $\p$ is the ideal generated by $(\frac{x^{p}}{y^{q}})$.\end{theorem}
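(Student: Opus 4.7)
The plan is to reduce this statement to the preceding integral-case proposition via rescaling, exploiting the earlier fact that equivalent valuations have identical valuation rings.

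First, I would produce an integral monomial valuation equivalent to $\nu$. Since $\nu(y) > 0$, set $\lambda := b/\nu(y) \in \R_{>0}$ and define $\nu' := \lambda \nu$. A direct check from the min-formula defining a monomial valuation shows that scaling a monomial valuation by a positive real constant again yields a monomial valuation; moreover
\[
\nu'(y) = \lambda \nu(y) = b, \qquad \nu'(x) = \lambda \nu(x) = \frac{b}{\nu(y)} \cdot \frac{a}{b}\,\nu(y) = a.
\]
So $\nu'$ is precisely the integral monomial valuation determined by $\nu'(x) = a$, $\nu'(y) = b$, and by construction $\nu$ and $\nu'$ are equivalent.

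Next, since $\gcd(a,b) = 1$, Bezout guarantees the existence of $p, q \in \Z_{>0}$ with $pa - qb = 1$ (one may replace any integer solution by $p + tb$, $q + ta$ for large $t$ to force positivity). The preceding proposition, applied to the integral monomial valuation $\nu'$, then gives
\[
R_{\nu'} \;=\; k\!\left[\tfrac{y^{a}}{x^{b}},\ \tfrac{x^{p}}{y^{q}}\right]_{\p}, \qquad \p = \left(\tfrac{x^{p}}{y^{q}}\right).
\]
Finally, invoking the earlier proposition that $\nu_1 \sim \nu_2$ if and only if $R_{\nu_1} = R_{\nu_2}$, we conclude $R_\nu = R_{\nu'}$, yielding the theorem.

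The main obstacle, such as it is, is essentially bookkeeping: one must verify that scaling by a positive real indeed preserves the property of being a monomial valuation (trivial from the definition) and that the valuation ring is an equivalence invariant (already established). There is no substantive new difficulty beyond the integral case.
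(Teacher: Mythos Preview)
Your proposal is correct and matches the paper's own argument essentially verbatim: the paper deduces the theorem from the preceding integral-case proposition by observing that a real-valued monomial valuation with $\nu(x)/\nu(y)$ rational is equivalent, via positive scaling, to an integral monomial valuation, and then invokes the earlier proposition that equivalent valuations share the same valuation ring. The paper does not give a separate formal proof for the theorem beyond this reduction, so your write-up is if anything more detailed than what appears there.
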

 
 \begin{exm}
 Let $\nu$ be the monomial valuation on $k(x,y)$ defined by $\nu(x)=3\pi$ and $\nu(y)=2\pi$. While $\nu$ itself is not an integral monomial valuation, $\nu(x)/\nu(y)=3/2$ and so the above theorem tells us the valuation ring associated to $\nu$ is
 $$R_\nu=k\left[\frac{y^{3}}{x^{2}}, \frac{x}{y}\right]_{\p}$$
 where $\p$ is the ideal generated by $x/y$. Note this valuation is actually equivalent to the monomial valuation $\nu'$ defined by $\nu'(x)=3$ and $\nu'(y)=2$, which we consider at the beginning of this section.
\end{exm}

 %%%%%%%%%%%%%%
 
 \subsection{Relation to Continued Fractions}
 
We now have an explicit formulation of $R_{\nu}$  when $\nu(x)/\nu(y)$ is rational. In general, when $\nu(x)/\nu(y)$ is not rationally valued calculating $R_{\nu}$ is difficult. However, we can determine $P_{\nu}$ in general using the continued fraction expansion of $\nu(x)/\nu(y)$.  Utilizing this description of $P_{\nu}$, we calculate $R_{\nu}$ in general. 

First, we define the \textbf{monotone positive branch of type $(s,t)$}, denoted $B(s, t)$, to be the longest subpath in $P_{\nu}$ such that every vertex is of the form $k[s,t/s^{m}]$ for some $m \geq 1$. With this notation, $P_{\nu}$ decomposes as a disjoint union of positive branches. 

\begin{exm} For example, the monomial valuation $\nu$ on $k(x,y)$, where $\nu(x)=24$ and $\nu(y)=7$. The continued fraction expansion of $24/7$ is $[3;2,3]$ or $[3;2,2,1]$, and so $P_{\nu}$ is
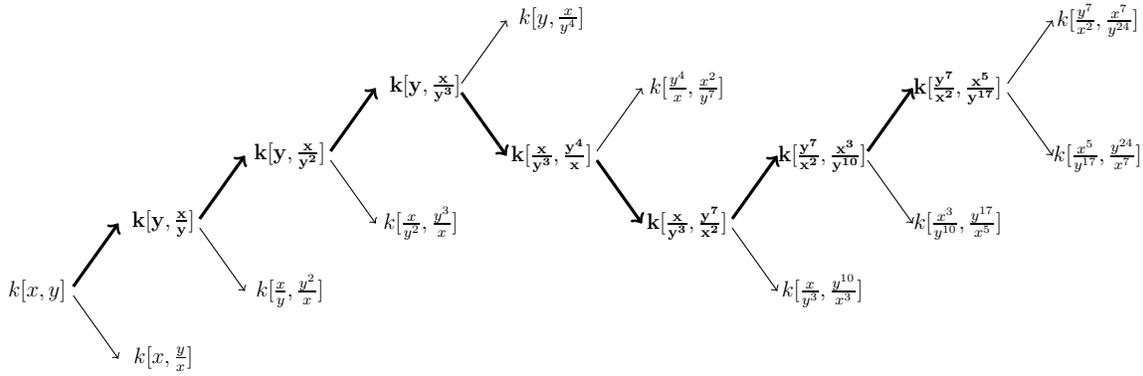
\begin{figure}[H]
\begin{center}
	\resizebox{!}{!}{\begin{tikzpicture}
	[scale=.6,colorstyle/.style={circle, draw=black!100,fill=black!100, thick,
	inner sep=0pt, minimum size=2 mm},every node/.style={scale=.7}]

	\node at (0,0){$k[x,y]$}; 
	
	\draw[very thick,->](.8,.1)--(1.8,1.5);
	\draw[->](.8,-.1)--(1.8,-1.5);
	\node at (2.8,1.5){$\mathbf{k[y, \frac{x}{y}]}$};
	\node at (2.8,-1.5){$k[x, \frac{y}{x}]$};
	
	\draw[very thick,->](3.6,1.6)--(4.6,3);
	\draw[->](3.6,1.4)--(4.6,0);
	
	\node at (5.6,3){$\mathbf{k[y, \frac{x}{y^{2}}]}$};
	\node at (5.6,0){$k[\frac{x}{y}, \frac{y^{2}}{x}]$};
	
	\draw[very thick,->](6.5,3.1)--(7.5,4.5);
	\draw[->](6.5,2.9)--(7.5,1.5);
	
	\node at (8.6,4.5){$\mathbf{k[y, \frac{x}{y^{3}}]}$};
	\node at (8.5,1.5){$k[\frac{x}{y^{2}}, \frac{y^{3}}{x}]$};
	
	\draw[->](9.4,4.6)--(10.4,6);
	\draw[very thick,->](9.4,4.4)--(10.4,3);
	
	\node at (11.4,6){$k[y, \frac{x}{y^{4}}]$};
	\node at (11.4,3){$\mathbf{k[\frac{x}{y^{3}}, \frac{y^{4}}{x}]}$};

	\draw[->](12.4,3.1)--(13.4,4.5);
	\draw[very thick,->](12.4,2.9)--(13.4,1.5);
	
	\node at (14.4, 4.5){$k[\frac{y^{4}}{x}, \frac{x^{2}}{y^{7}}]$};
	\node at (14.4, 1.5){$\mathbf{k[\frac{x}{y^{3}}, \frac{y^{7}}{x^{2}}]}$};
	\draw[very thick,->](15.4,1.6)--(16.4,3);
	\draw[->](15.4,1.4)--(16.4,0);
	
	\node at (17.4, 3){$\mathbf{k[\frac{y^{7}}{x^{2}}, \frac{x^{3}}{y^{10}}]}$};
	\node at (17.4,0){$k[\frac{x}{y^{3}}, \frac{y^{10}}{x^{3}}]$};

	\draw[very thick,->](18.4,3.1)--(19.4,4.5);
	\draw[->](18.4,2.9)--(19.4,1.5);
	\node at (20.4,4.5){$\mathbf{k[\frac{y^{7}}{x^{2}},\frac{x^{5}}{y^{17}}]}$};
	\node at (20.4,1.5){$k[\frac{x^{3}}{y^{10}}, \frac{y^{17}}{x^{5}}]$};
	
	\draw[->](21.5,4.6)--(22.5,6);
	\draw[->](21.5,4.4)--(22.5,3);
	
	\node at (23.5,6){$k[\frac{y^{7}}{x^{2}},\frac{x^{7}}{y^{24}}]$};
	\node at (23.5,3){$k[\frac{x^{5}}{y^{17}}, \frac{y^{24}}{x^{7}}]$};
	\end{tikzpicture}}
	\end{center}
	\caption{$P_{\nu}$ for $\nu(x)=24, \nu(y)=7$}
\end{figure}
In the above notation,
$$P_{\nu}=B(y,x)\cup B(x/y^3,y)\cup B(y^7/x^2,x/y^3).$$
Further, $B(y,x)$ has length three while $B(x/y^3,y)$ and $B(y^7/x^2,x/y^3)$ have length two. 
%The lengths of these paths correspond to the first three digits in the continued fraction expansion of $24/7: [3;2;2;1]$. 
\end{exm}

Since our assumptions on $\nu(x)$ and $\nu(y)$ ensure that $k[x,y]$ and $k[y,x/y]$ will always be the first two vertices in $P_{\nu}$, we have that
\begin{equation*}P_{\nu}=B\left(y,x\right)\cup B\left(\frac{x}{y^{m_{1}}}, y\right)\cup B\left(\frac{y^{m_{2}m_{1}+1}}{x},\frac{x}{y^{m_{1}}} \right)\cup\cdots\cup B(s,t)\cup B\left(\frac{t}{s^{m_{i}}}, s\right)\cup\cdots\end{equation*}
for some $m_i\in \N$. Note in a slight abuse of notation we consider $k[x,y]$ to be in $B(y,x)$ (Since $k[x,y]=k[y,x/y^0]$).  So in essence to understand $P_{\nu}$, we must only know the $m_{i}$'s.  Since the first vertex in $B(x/y^{m_{1}}, y)$ directly follows the last vertex of $B(y, x)$, which by definition has the form $k[y, x/y^{t}]$ for some $t$, we have that $m_{1}$ is the length of $B(y,x)$.  By a similar argument, we have $m_{2}=\length(B(x/y^{m_{1}}, y))+1$ and in general that $m_{i}=\length(B(s,t))+1$ when $i\neq1$.  Note the case when $i=1$ is special because of the abuse of notation mentioned above.
 
Since $B(y,x)$ is the longest positive path with vertices of the form $k[y, x/y^{m}]$ and $\nu(x/y^{m})=\nu(x)-m\nu(y)$, it is the case that $k[y, x/y^{m}]$ will be positive for $1\leq m \leq \lfloor\nu(x)/\nu(y) \rfloor$ assuming $\nu(y)\neq1$. If, however, $\nu(y)=1$, then this inequality becomes $1\leq m < \lfloor\nu(x)/\nu(y) \rfloor$.  Therefore, recalling Section \ref{section: CF}, we find that $\lfloor \nu(x)/\nu(y)\rfloor$ is equal to $d_{0}$ in the continued fraction expansion of $\nu(x)/\nu(y)$, and thus $m_{1}=d_{0}$.  Applying this same argument to the general case we get that $m_{i}=d_{i-1}$. 

When $\nu(x)/\nu(y)$ is rational, $P_{\nu}$ terminates, as does the continued fraction expansion of $\nu(x)/\nu(y)$. However, when $\nu(x)/\nu(y) \in \Q - \Z$, it has two viable expansions, namely $$\nu(x)/\nu(y) = [d_0;d_1,\ldots,d_n]=[d_0;d_1,\ldots,d_{n}-1,1].$$ To find $P_{\nu}$ in this case, we consider the longer continued fraction expansion up to the last term. So the $m_i$'s for $\nu$ are $d_0,d_1,\ldots,d_{n}-1$.

\begin{exm}Once again returning to Example \ref{Val23} where $\nu$ is the monomial valuation on $k(x,y)$ such that $\nu(x)=3$ and $\nu(y)=2$.  From our previous work we know that $P_{\nu}$ is the path given by
\begin{figure}[H]
\begin{center}
 	\begin{tikzpicture}
	[scale=.75,colorstyle/.style={circle, draw=black!100,fill=black!100, thick,
	inner sep=0pt, minimum size=2 mm}]
	\node at (0,0){$k[x,y]$};
	\node at (5,0){$k[y, x/y]$};
	%\node[gray] at (5,-3){$\C[x, y/x]$};
	
	%\node[gray] at (10, 4.5){$\C[y, x/y^{2}]$};
	\node at (10.8, 0){$k[x/y, y^{2}/x]$};
	%\node at (10, -1.5){$\C[x, y/x^{2}]$};
	%\node at (10, -4.5){$\C[y/x, x^{2}/y]$};
	
	%\node[gray] at (16, 3){$\C[x/y, y^3/x^2]$};
	%\node[gray] at (16, 0){$\C[y^2/x, x^2/y^3]$};
	
	\draw[->,ultra thick](.8,0)--(3.8,0);
	%\draw[->,thick,gray](.8,-.1)--(3.8,-3);
	%\draw[thick](6.1,-2.9)--(8.65, -1.5);
	\draw[->,ultra thick](6.1,0)--(9.1, 0);
	%\draw[->,thick, gray](6.1,3.1)--(8.65,4.5);
	%\draw[thick](6.1,-3.1)--(8.4,-4.5);
	
	%\draw[->,thick,gray](11.5,1.6)--(14.3,3);
	%\draw[->,thick,gray](11.5,1.4)--(14.2,0);
	
	%\draw[dashed](11.2,1.5)--(12.5,2);
	%\draw[dashed](11.2,1.5)--(12.5,1);
	
	%\draw[dashed](11.2,-1.5)--(12.5,-1);
	%\draw[dashed](11.2,-1.5)--(12.5,-2);
	%\draw[dashed](11.5,-4.5)--(12.5,-5);
	%\draw[dashed](11.5,-4.5)--(12.48,-4);
	
	\end{tikzpicture}
\end{center}
 \caption{$P_{\nu}$ for monomial valuation given by $\nu(x)=3$ and $\nu(y)=2$.}
\end{figure}

So both $B(y,x)$ and $B(x/y, y)$ have length one, corresponding to the first two digits of the continued fraction $3/2 = [1;1,1]$ from Example \ref{ExmConFrac}.  
\end{exm}
 
So given a monomial valuation $\nu$ on $k(x,y)$ the path $P_{\nu}$ is determined by the continued fraction expansion of $\nu(x)/\nu(y)$.  Since as we showed in Section \ref{section: CF} a real number $r$ has a finite continued fraction if and only if $r$ is rational, we now know when  $P_{\nu}$ is finite.

\begin{prop}Let $\nu$ be a monomial valuation on $k(x,y)$. Then $P_{\nu}$ is finite if and only if $\nu(x)/\nu(y)$ is rational.\end{prop}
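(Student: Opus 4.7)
The plan is to deduce this proposition directly from the discussion immediately preceding it together with the earlier proposition characterizing rational numbers as exactly those reals with a finite continued fraction expansion. The key input is the correspondence, just established, between the structure of $P_\nu$ and the continued fraction expansion of $\nu(x)/\nu(y)$: namely, $P_\nu$ decomposes as an ordered disjoint union of monotone positive branches $B_1, B_2, \ldots$, where the length $m_i$ of $B_i$ equals the continued fraction digit $d_{i-1}$ of $\nu(x)/\nu(y)$ (using the longer expansion when $\nu(x)/\nu(y) \in \Q - \Z$).

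For the $(\Leftarrow)$ direction, I would assume $\nu(x)/\nu(y) \in \Q$. Then by the earlier proposition in Section \ref{section: CF}, its continued fraction expansion is finite, say $\nu(x)/\nu(y) = [d_0; d_1, \ldots, d_n]$. Since $m_i = d_{i-1}$ for each $i$, the path $P_\nu$ consists of at most $n+1$ branches, each of finite length, so $P_\nu$ has finitely many vertices.

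For the $(\Rightarrow)$ direction, I would argue the contrapositive. If $\nu(x)/\nu(y) \notin \Q$, then by the same earlier proposition, its continued fraction expansion is infinite: $[d_0; d_1, d_2, \ldots]$ with $d_i \geq 1$ for all $i \geq 1$. Hence every branch $B_i$ has length $m_i = d_{i-1} \geq 1$, giving infinitely many distinct vertices in $P_\nu$.

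The main subtlety, which I expect to be the only non-routine step, is to verify that in the rational case the path actually terminates after the branches corresponding to the CF digits, rather than continuing indefinitely. I would confirm this by checking that at the final branch one reaches a vertex $k[s,t]$ at which $\nu(t)/\nu(s)$ is a positive integer, so that the next candidate monomial in $T$ has $\nu = 0$ and the corresponding vertex fails to be $\nu$-positive. This is exactly the mechanism illustrated in Example \ref{Val23}, where the path stops at $k[x/y, y^2/x]$ because $\nu(y^3/x^2) = \nu(x^2/y^3) = 0$, and it is precisely why one must use the longer of the two continued fraction expansions when $\nu(x)/\nu(y) \in \Q - \Z$.
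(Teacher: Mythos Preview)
Your proposal is correct and follows essentially the same approach as the paper: the proposition is stated without a separate proof environment, with the justification being the sentence immediately preceding it (``Since as we showed in Section~\ref{section: CF} a real number $r$ has a finite continued fraction if and only if $r$ is rational, we now know when $P_{\nu}$ is finite''), which is exactly the argument you spell out. If anything, your treatment is more careful than the paper's, since you explicitly address the terminal behavior in the rational case that the paper leaves implicit.
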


Thus, if $\nu(x)/\nu(y)$ is irrational, then $P_{\nu}$ is infinite, and so there is no terminal object.  However, $P_{\nu}$ can still be used to describe the valuation ring associated to $\nu$.

\begin{theorem}\label{LastTheorem} If $\nu$ is a monomial valuation on $k(x,y)$ such that $\nu(x)/\nu(y)$ is irrational, then 
$$ R_{\nu}=\bigcup_{R_{i}\in V(P_{\nu})}R_{i_{\m_{i}}}$$
where $V(P_{\nu})$ is the vertex set of $P_{\nu}$ and $\m_{i}=\m_{\nu}\cap R_{i}$, with $\m_{\nu}$ the maximal ideal of $R_{\nu}$.
\end{theorem}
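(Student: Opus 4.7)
The plan is to establish the two inclusions separately, with the reverse inclusion being substantially harder.

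For $\bigcup R_{i,\m_i} \subseteq R_\nu$: since each generator $s_i, t_i$ of $R_i = k[s_i,t_i]$ satisfies $\nu(s_i), \nu(t_i) > 0$, every polynomial in these generators has non-negative $\nu$-value, so $R_i \subseteq R_\nu$. Given $p/q \in R_{i,\m_i}$, the denominator $q$ satisfies $\nu(q) \geq 0$ (as $q \in R_i \subseteq R_\nu$) but also $\nu(q) \not> 0$ (since $q \notin \m_\nu$), forcing $\nu(q) = 0$. Therefore $\nu(p/q) = \nu(p) \geq 0$.

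For $R_\nu \subseteq \bigcup R_{i,\m_i}$, the crucial point is the irrationality of $\nu(x)/\nu(y)$: no two distinct monomials $x^iy^j$ share the same $\nu$-value. Given $f = g/h \in R_\nu$ with $g, h \in k[x,y]$, let $(i_0, j_0)$ and $(i_1, j_1)$ be the unique minimum-valuation monomials of $g$ and $h$ respectively, and factor
\[
g = \lambda x^{i_0} y^{j_0} (1 + g^{\sharp}), \qquad h = \mu x^{i_1} y^{j_1} (1 + h^{\sharp}),
\]
where $\lambda, \mu \in k^{\times}$ and $g^{\sharp}, h^{\sharp}$ are finite Laurent polynomials in $x, y$, each of whose monomials has strictly positive $\nu$-value. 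Because $\nu(1) = 0$ is strictly less than the $\nu$-value of every other term, $\nu(1 + g^{\sharp}) = \nu(1 + h^{\sharp}) = 0$, and
\[
f = \frac{\lambda}{\mu}\, x^{i_0 - i_1} y^{j_0 - j_1}\, \frac{1 + g^{\sharp}}{1 + h^{\sharp}}.
\]
The goal is then to choose $i$ large enough that $1 + g^{\sharp}$, $1 + h^{\sharp}$, and $x^{i_0 - i_1} y^{j_0 - j_1}$ all lie in $R_i$.

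The heart of the argument is the combinatorial lemma that \emph{every Laurent monomial $x^\alpha y^\beta$ with $\alpha \nu(x) + \beta \nu(y) > 0$ lies in $R_i$ for all sufficiently large $i$}. To prove it, write $s_i = x^{a_1} y^{b_1}$ and $t_i = x^{a_2} y^{b_2}$; at each step along $P_\nu$ the matrix of exponents undergoes an integer shear (one column minus the other), preserving unimodularity. Hence $x^\alpha y^\beta \in R_i$ precisely when $(\alpha, \beta)$ lies in the cone $C_i$ spanned by $(a_1, b_1)$ and $(a_2, b_2)$. Each step strictly enlarges $C_i$, and because the progression along $P_\nu$ is governed by the continued fraction algorithm for $\nu(x)/\nu(y)$ from Section \ref{section: CF}, the two exponent vectors approach the line $L : \alpha \nu(x) + \beta \nu(y) = 0$ from the two opposite rays bounding the open half-plane $H := \{\alpha \nu(x) + \beta \nu(y) > 0\}$, so $\bigcup_i C_i = H \cup \{0\}$.

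Finally, apply the lemma to each of the finitely many Laurent monomials appearing in $g^{\sharp}$ and $h^{\sharp}$, together with $x^{i_0 - i_1} y^{j_0 - j_1}$ (noting that if $\nu(f) = 0$ then irrationality forces $(i_0 - i_1, j_0 - j_1) = (0, 0)$, which is trivially in every $C_i$). Choosing $i$ so that all of these lie in $R_i$, the numerator $\frac{\lambda}{\mu} x^{i_0 - i_1} y^{j_0 - j_1} (1 + g^{\sharp})$ is in $R_i$, while $1 + h^{\sharp} \in R_i$ has $\nu$-value zero and hence lies outside $\m_i$; so $f \in R_{i, \m_i}$. The main obstacle is the combinatorial lemma: verifying that the monotone cones $C_i$ actually sweep out the entire positive half-plane $H$ requires the continued fraction theory from Section \ref{section: CF}, which controls how the best approximations $(a_1, b_1)$ and $(a_2, b_2)$ of the irrational slope $-\nu(x)/\nu(y)$ accumulate on $L$ from both sides.
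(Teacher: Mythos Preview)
Your forward inclusion matches the paper's proof verbatim. For the reverse inclusion, however, the paper takes a one-line shortcut: it simply cites Exercise~4.12(3) of Chapter~II of Hartshorne, which identifies a valuation ring with the union of the local rings at the centers of $\nu$ along an infinite sequence of point blow-ups. Your argument is genuinely different and considerably more explicit. You exploit the irrationality of $\nu(x)/\nu(y)$ to isolate a unique minimum monomial in numerator and denominator, reduce to a statement about Laurent monomials of positive value, and then prove the cone lemma that the unimodular cones $C_i$ exhaust the open half-plane $H$ by appealing to the continued fraction machinery of Section~\ref{section: CF}. This is correct: the subtractive Euclidean algorithm drives both generator values $\nu(s_i),\nu(t_i)\to 0$, so the normalized exponent vectors converge to the line $L$, and a moment's thought (one boundary ray rotates into the fourth quadrant, the other into the second) shows they approach opposite rays of $L$, whence $\bigcup_i C_i = H\cup\{0\}$. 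The trade-off is clear: the paper's citation is short but treats the result as a black box, whereas your proof is self-contained and actually uses the continued fraction theory the paper develops---arguably more in keeping with the paper's stated aim of giving ``elementary and completely self-contained'' arguments. The one place you should tighten is the cone lemma itself: you assert that the boundary rays approach $L$ ``from the two opposite rays bounding $H$,'' but you should say explicitly why they cannot both limit to the \emph{same} ray of $L$ (the alternating-branch structure of $P_\nu$, or equivalently the alternating-side property of continued fraction convergents, is what forces this).
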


\begin{proof}First, let us show that $R_{i_{\m_{i}}}$ is contained in $R_{\nu}$.  Towards this goal, let $r\in R_{i_{\m_{i}}}$, meaning that $r=f/u$ where $f\in R_{i}$ and $u\not\in \m_{i}$.  By the definition of $P_{\nu}$, every element of $R_{i}$ has non-negative values so $\nu(f)\geq0$.  Further, since $\m_{i}=R_{i}\cap \m_{\nu}$, we know $\nu(u)=0$.  Thus, we have that $\nu(r)=\nu(f)-\nu(u)\geq0$, meaning 
$$\bigcup_{R_{i}\in V(P_{\nu})}R_{i_{\m_{i}}}\subset R_{\nu}.$$
The other inclusion follows from Exercise 4.12(3) of Chapter II of \cite{hartshorne_algebraic_1977}. \end{proof}
\subsection{General Monomial Valuations}

In summary, any $\R$-valued monomial valuation $\nu$ determines a unique path within the valuation tree $T$.  Moreover, these paths can be used to find the valuation ring associated with $\nu$.  Conversely, any finite path (and certain infinite paths) within $T$ will correspond to a unique monomial valuation.

While up until this point we have only considered $\R$-valued valuations, much of what we have done holds for any totally ordered abelian group $\Gamma$. In particular, if $\nu$ is a $\Gamma$-valued monomial valuation on $k(x,y)$, then we can still associate $\nu$ with a path within $T$ as we described previously. Additionally, the description of the valuation ring given in Theorem \ref{LastTheorem} holds regardless of the choice of $\Gamma$. See Exercise 5.6 of Chapter V of \cite{hartshorne_algebraic_1977}.

As mentioned above not every path in the tree $T$ corresponds to a $\R$-valued monomial valuation on $k(x,y)$. In particular, infinite paths where all but finitely many vertices are of the form $k[f, g/f^t]$ for some Laurent monomial $f$ and $g$ do not correspond to $\R$-valued monomial valuations. However, these paths do correspond to monomial valuations on $k(x,y)$ with value group other than $\R$. For example, consider the following: 
%The only paths in $T$ which do not correspond to a $\R$-valued valuations are paths where all but finitely many vertices are of the from $k[f, g/f^t]$. However, if we allow for a different choice of totally ordered abelian group, the correspondence between paths in $T$ and equivalence classes of monomial valuations is bijective. For example, consider the following:

\begin{exm}

Let $P$ be a path in $T$, which does not correspond to a $\R$-valued monomial valuation. Then all but finitely many of the vertices of $P$ are of the form $k[f, g/f^t]$ for $t\in \N$ where $f$ and $g$ are Laurent monomials. Our goal is to construct a $\Z^2$-valued monomial valuation on $k(x,y)$ corresponding to $P$ where $\Z^2$ has lexicographic order. Towards this, note that if $\nu$ is a $\Z^2$-valued monomial valuation on $k(x,y)$ such that $\nu(g)=(1,0)$ and $\nu(f)=(0,1)$ the $\nu(g/f^t)=(1,-t)$. As $\Z^2$ has lexicographic ordering, this would mean $g/f^t$ is $\nu$-positive for all $t$ and all vertices of the form  $k[f, g/f^t]$ are in the path $P_\nu$. 

Thus, to find a $\Z^2$-valued monomial valuation $\nu$ on $k(x, y)$ corresponding to $P$, it suffices to show that we can construct such a valuation where $\nu(f)=(0,1)$ and $\nu(g)=(1,0)$. However, we know that $k[f,g]$ contains $k[x,y]$, so we can write both $x$ and $y$ as polynomials in terms of $f$ and $g$. Thus, once we specify the values of both $f$ and $g$, we can solve for the values of $x$ and $y$ afterwards. So to construct a $\Z^2$-valued monomial valuation $\nu$ corresponding to $P$, simply let $\nu(f)=(0,1)$ and $\nu(g)=(1,0)$. 

\end{exm}

\section{Resolution of Cusp Singularities}

We now shift our attention to examining how certain monomial valuations are related to a resolution process for plane cusps.  Specifically, we describe a resolution for the plane curve $x^b=y^a$, where $a, b \in \Z_{> 1}$ are relatively prime integers, over an arbitrary field $k$.  To do this, we show an explicit connection between the process of resolving this singularity and the process of finding the path $P_{\nu}$ as described in Section \ref{Val-Tree}.  Finally, we show how resolving the singularities of this family of curves is related to continued fractions. 

For $p \in \A^2$, let $\pi:B_p(\A^2)\arrow{}\A^2$ be the blowing up of $\A^2$ at $p$. Recall that $\pi$ is an isomorphism except over $p$, and the fiber $E$ over $p$ is called the exceptional set. Further $B_p(\A^2)$ is covered by two charts $U_1$ and $U_2$, both isomorphic to $\A^2$. Choosing coordinates $x,y$ for $\A^2$ so that $p=(0,0)$, then $U_1$ has coordinates $x, y/x$ and $U_2$ has coordinates $y, x/y$. The exceptional set $E$ is defined by $x$ in $U_1$ and by $y$ in $U_2$. 

Now if we consider a curve $C$ in $\A^2$, which passes through $p$ and is defined by some polynomial $f(x,y)$, then the preimage of $C$ under $\pi$ is defined by $f\circ \pi$. Moreover, $\pi^{-1}(C)$ decomposes as the union of two curves 
$$\pi^{-1}(C)=\tilde{C}\cup E$$
 where $E$ is the exceptional set, $\pi^{-1}(\{p\})$, and $\tilde{C}$ is the closure of $\pi^{-1}(C\setminus \{p\})$ in $B_p(\A^2)$, which is called the proper transform of $C$. In the chart $U_1$, for example, the preimage of $X$ is defined by 
$$f\left(x, x\left(\frac{y}{x}\right)\right)=x^d\tilde{f}\left(x, \frac{y}{x}\right)$$
where $d$ is the degree of the monomial occurring in $f$ of lowest degree. Note that the factor $x^d$ defines the exceptional set $E$, (``counted $d$-times'') and the factor $\tilde{f}$ defines $\tilde{C}$. 

With this set-up, we can now define what we mean by a resolution of singularities of a curve.

\begin{defn}A curve $C$, which is a union of irreducible components $C=C_1\cup\cdots\cup C_n$, on a surface is resolved if
\begin{enumerate}
\item each $C_i$ is smooth.
\item no three components meet at a point.
\item no two curves are tangent at any point, that is, if $C_i$ and $C_j$ meet at a point then their tangent lines at that point are different.
\end{enumerate}
\end{defn}
If a curve $C$ satisfies (2) and (3) above, then $C$ is said to have normal crossings. Thus, an alternative definition for a curve to be resolved is that all of its components are smooth and meet only in normal crossings. Further, a resolution of singularities of a curve $C$ is a sequence of blow-ups such that the preimage of $C$ under these blow ups is resolved. 

Before describing our results in general, we find it useful to first consider an example of a resolution of singularities that both illustrates the above definitions and highlights our results to come. 

\begin{exm}\label{Exm: Rez 1}
Let us look at the affine variety $V=\V(x^{2}-y^{3})$, i.e. the solution set of $x^2=y^3$, over an arbitrary field $k$. The Jacobian matrix associated to $V$ is $J=[2x, 3y^{2}]$ and so $J_{p}$ has rank zero only at $p=(0,0)$ meaning that $V$ is singular only at the origin. As considered above, let $\pi:B_{p}(\A^2)\arrow{}\A^2$ be the blowing up map of $\A^2$ at $p=(0,0)$. As described above, $B_p(\A^2)$ is covered by two charts $U_1$ and $U_2$, both isomorphic to $\A^2$, with coordinates $x, y/x$ and $y, x/y$ respectively. When we look at $\pi^{-1}(V)$ in the $U_2$ chart, a simple calculation shows that:
\begin{equation*}\pi^{-1}(V)\cap U_2=\V\left(y^{2}\right)\cup\V\left(\left(\frac{x}{y}\right)^{2}-y\right).\end{equation*}
\begin{figure}[H]\label{Hyperbola}
\begin{tikzpicture}
 \draw[ultra thick, domain=0:2,smooth,variable=\t] plot ({\t},{\t*\t});
 t\draw[ultra thick, domain=0:-2,smooth,variable=\t] plot ({\t},{\t*\t});
 \draw [thick, <->] (0,4) -- (0,0) -- (4,0);
 \draw [thick, <->] (0,-2) -- (0,0) -- (-4,0);
 \node at (0,4.3) {$y$};
 \node at (4.3,0) {$\frac{x}{y}$} ;
% \node[draw,circle,inner sep=1.5pt,fill] at (1,1) {};
 \draw [ultra thick, <->] (-4,0)--(4,0);
 
 \node[scale=.5] at (2.6,2.1) {$\tilde{V}=\V\left(y=\left(\frac{x}{y}\right)^2\right)$};
  \node[scale=.6] at (-2.6,-0.6) {$E=\V(y^2)$};
\end{tikzpicture}
\caption{Picutre of $\pi^{-1}(V)$ in the $U_2$ chart with coordinates $y, x/y$.}
\end{figure}
Note that $\V(y^2)$ corresponds to the exceptional set in $U_2$ while the other factor is the proper transform of $V$ in $U_2$. In fact, $\pi^{-1}(V)\cap U_2$ satisfies conditions (1) and (2) for being resolved -- i.e. irreducible components are smooth and no more than two components intersect. However, $\pi^{-1}(V)\cap U_2$ has a non-normal crossing. More precisely, in this chart the exceptional set is tangent to the proper transform at $(0,0)$. So $\pi^{-1}(V)$ is not resolved. 

Turning our attention to the $U_1$ chart of $B_p(\A^2)$ we see that in this chart $\pi^{-1}(V)$ is given by\begin{equation*}\pi^{-1}(V)\cap U_1=\V\left(x^{2}\right)\cup\V\left(1-\left(\frac{y}{x}\right)^{3}x\right).\end{equation*}
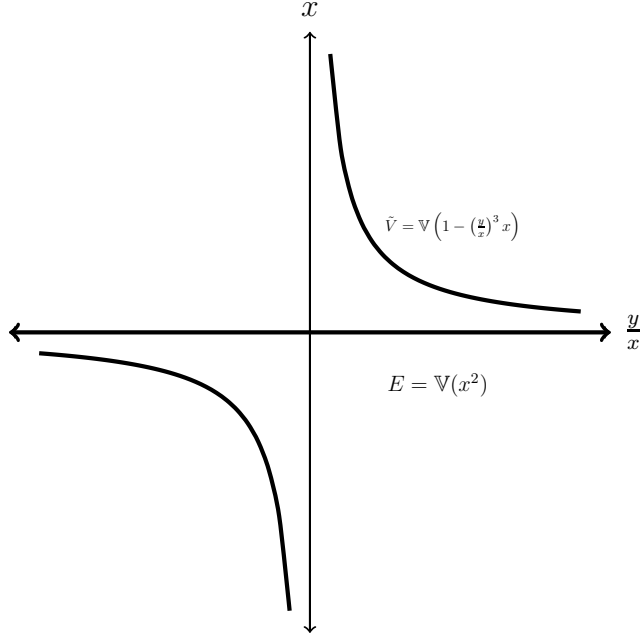
\begin{figure}[H]\label{Hyperbola}
\begin{tikzpicture}
 \draw[ultra thick, domain=.27:3.6,smooth,variable=\t] plot ({\t},{1/\t});
 \draw[ultra thick, domain=-.27:-3.6,smooth,variable=\t] plot ({\t},{1/\t});
 \draw [thick, <->] (0,4) -- (0,0) -- (4,0);
 \draw [thick, <->] (0,-4) -- (0,0) -- (-4,0);
 \draw [ultra thick, <->] (4,0)--(-4,0);
 \node at (0,4.3) {$x$};
 \node at (4.3,0) {$\frac{y}{x}$} ;
% \node[draw,circle,inner sep=1.5pt,fill] at (1,1) {};
 \node[scale=.5] at (1.9,1.4) {$\tilde{V}=\V\left(1-\left(\frac{y}{x}\right)^{3}x\right)$};
 \node[scale=.7] at (1.7,-.7) {$E=\V(x^2)$}; 
\end{tikzpicture}
\caption{Picture of $\pi^{-1}(V)$ in the $U_1$ chart with coordinates $x, y/x$.}
\end{figure}
Similar to what we saw in the $U_2$ chart, $\V(x^2)$ is the exceptional set in $U_1$ while the other factor is the proper transform of $\pi^{-1}(V)$ in $U_1$. However, unlike in the other chart $\pi^{-1}(V)\cap U_1$ is smooth and has only normal crossings, and so is resolved. 

%We will call $\V(y^2), \V(x^2)$, and anything of the form $\V(f^Bg^A)$ (where $f, g \in k(x,y)$) the ``exceptional part" of the chart. Anything not of this form (i.e. $\V(f^{\beta}-g^{\alpha}), f, g \in k(x,y), \alpha, \beta \in \Z_{\geq0}$) is called the``non-exceptional part" of the chart.

%From this we see that in each chart $B_{p}(V)$ is nonsingular, however, in the $U_1$ chart $B_{p}(V)$ has a non-normal crossing, since the non-exceptional part is tangent to the exceptional part at $(0,0)$.  Recall that two affine varieties $V_{1}, V_{2}\subset \A^{n}$ intersect normally at a $p\in \A^{n}$ if and only if 
%\begin{equation*} \mathrm{codim} T_{p}(V_{1}\cap V_{2})=\mathrm{codim} T_{p}(V_{1})+\mathrm{codim} T_{p}(V_{2}).\end{equation*}
%As a curve is resolved only when it has no singularities and no normal crossings, 

Since $\pi^{-1}(V)\cap U_2$ is not resolved we must blow up $U_2$ at its origin in order to try and remove the tangential intersection. This blow-up has two charts, $U_3$ and $U_4$ both isomorphic to $\A^2$, with coordinates $x/y, y^2/x$ and $y, x/y^2$, respectively.  Looking in the $U_3$ chart, we see that the preimage of $\pi^{-1}(V)\cap U_2$ is given by:
\begin{equation*}  \V\left(\left(\frac{x}{y}\right)^3\left(\frac{y^2}{x}\right)^2\right)\cup \V\left(\frac{x}{y}-\frac{y^2}{x}\right).
\end{equation*}
In the $U_4$ chart, the preimage is given by: 
\begin{equation*} \V (y^3) \cup \V \left(1-\left(\frac{x}{y^2}\right)^2y\right).
\end{equation*}
In the $U_4$ chart, the blow up of the curve is smooth and has normal crossings. In the $U_3$ chart, it is smooth, but consists of three lines intersecting at a single point (the origin), which is a non-normal crossing. So we must once again blow up at the origin, this time in the $U_3$ chart. Again, this blow up has two charts $U_5, U_6$, both isomorphic to $\A^2$, with coordinates $x/y, y^3/x^2$ and $y^2/x , x^2 /y^3$, respectively. In these charts, our curve of interest is given by, respectively, 
\begin{equation*} \V\left( \left(\frac{x}{y}\right)^6\left(\frac{y^3}{x^2}\right)^2 \right) \cup \V\left( 1- \frac{y^3}{x^2} \right), \mbox{ and } 
\V \left( \left(\frac{x^2}{y^3}\right)^3\left(\frac{y^2}{x}\right)^6\right) \cup \V \left( \frac{x^2}{y^3}-1\right).
\end{equation*}
In both charts, the curve is smooth with normal crossings. Thus we have resolved $V=\V(x^{2}-y^{3})$, and the sequence of blow ups we preformed is a resolution of singularities for $\V(x^2-y^3)$. 

Looking at this process we see that in every step, each blow up is covered by two charts. If we take the coordinate rings associated with these charts and form a graph where the edges are induced by inclusions, we obtain the tree:
\begin{figure}[H]
\begin{center}
 	\begin{tikzpicture}
	[scale=.75,colorstyle/.style={circle, draw=black!100,fill=black!100, thick,
	inner sep=0pt, minimum size=2 mm}]
	\node at (0,0){$\mathbf{k[x,y]}$};
	\node at (5,3){$\mathbf{k[y, x/y]}$};
	\node at (5,-3){$k[x, y/x]$};
	
	\node at (10, 4.5){$k[y, x/y^{2}]$};
	\node at (10, 1.5){$\mathbf{k[x/y, y^{2}/x]}$};
	%\node at (10, -1.5){$\C[x, y/x^{2}]$};
	%\node at (10, -4.5){$\C[y/x, x^{2}/y]$};
	
	\node at (16, 3){$k[y^2/x, x^2/y^3]$};
	\node at (16, 0){$k[x/y, y^3/x^2]$};
	
	\draw[->,ultra thick](.8,.1)--(3.8,3);
	\draw[->,thick](.8,-.1)--(3.8,-3);
	%\draw[thick](6.1,-2.9)--(8.65, -1.5);
	\draw[->,ultra thick](6.1,2.9)--(8.4, 1.5);
	\draw[->, thick](6.1,3.1)--(8.65,4.5);
	%\draw[thick](6.1,-3.1)--(8.4,-4.5);
	
	\draw[->,thick](11.5,1.6)--(14.3,3);
	\draw[->,thick](11.5,1.4)--(14.2,0);
	
	%\draw[dashed](11.2,1.5)--(12.5,2);
	%\draw[dashed](11.2,1.5)--(12.5,1);
	
	%\draw[dashed](11.2,-1.5)--(12.5,-1);
	%\draw[dashed](11.2,-1.5)--(12.5,-2);
	%\draw[dashed](11.5,-4.5)--(12.5,-5);
	%\draw[dashed](11.5,-4.5)--(12.48,-4);
	
	\end{tikzpicture}
\end{center}
 \caption{Tree resulting from the resolution of $\V(x^{2}-y^{3})$.}
\end{figure}
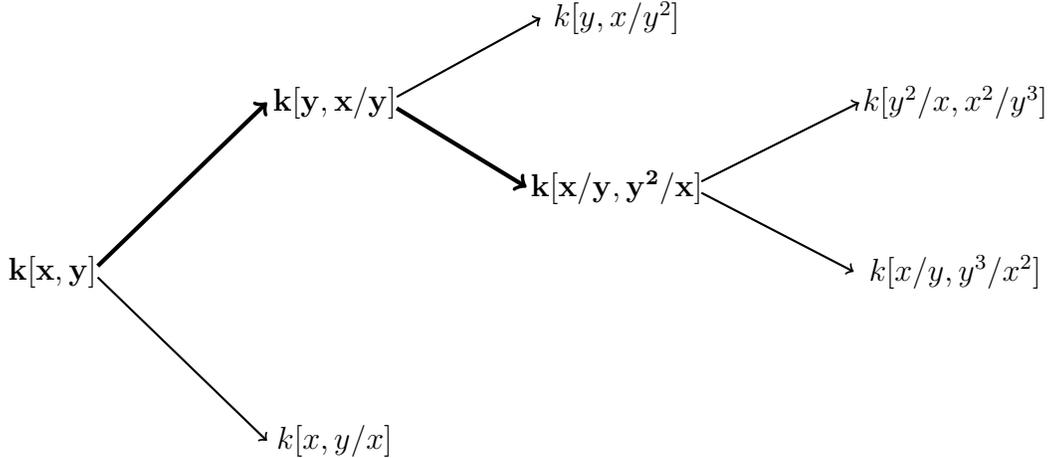
Moreover, if we consider the subgraph whose vertices are the coordinate rings corresponding to charts in which there was either a singularity or a non-normal crossing, we get a path. In fact, the path we get is identical to the path we constructed in Examples 3.8 and 3.12 for the $\R$-valued monomial valuation $\nu$ on $k(x,y)$ given by $\nu(x)=3$ and $\nu(y)=2$. Since the path $P_\nu$ is determined by the continued fraction of $\nu(x)/\nu(y)=3/2$ we see that in this case the continued fraction determines a resolution of singularities by telling us which charts feature either singularities or non-normal crossings. 

% we have that the continued fraction expansion of $3/2$ tells us which chart will contain a singularity or non-normal crossing. 

\end{exm}

Of course, nothing is special about the choice of two and three, and for any positive relatively prime integers $a$ and $b$ both larger than 1, we have a similar relationship.  However, before stating the general result, we must formalize the notions of the tree and path discussed in the above example.

The coordinate tree $T$ is a infinite directed tree whose vertices are certain subrings of $k(x,y)$.  The ring $k[x,y]$ is a vertex of $T$, and then we define the vertex set of $T$ recursively.  In particular, if $k[f,g]$ is a vertex of $T$ then $k[f, g/f]$ and $k[g, f/g]$ are also vertices of $T$, and there are edges going from $k[f,g]$ to $k[f, g/f]$ and $k[g, f/g]$.  Put another way, $T$ is a tree whose vertices are the coordinate rings associated to the charts of repeated blow-ups of the origins in each of the newly-created charts isomorphic to $\A^2$.  Note: from this construction we obtain a tree identical to the valuation tree defined in Section \ref{Val-Tree}.

Given positive relatively prime integers $a$ and $b$, we want to associate a subgraph of $T$ with the curve $V=\V(x^{b}-y^{a})$. The idea is that this subgraph will tell us exactly what points need to be blown up to resolve the curve $V$. Define $P_{a,b}$ to be the subgraph of $T$ such that each vertex in $P_{a,b}$ is a coordinate ring corresponding to an affine chart in which $V$ has either a singularity or a non-normal crossing. 
 
%If we consider the tree of coordinate rings representing the charts of all possible blow-ups of curves of the form $\V(x^b-y^a)  \forall  a,b \in \Z_{>0}$ we obtain an infinite tree identical to $T$ from section INTERNAL REFERENCE NEEDED.  In the resolution process each blow up contained precisely one smooth chart without non-normal crossing.  Thus, the coordinate rings associated to the charts in which there are singularities and non-normal crossings corresponds to a path in $T$.  Moreover this path is exactly the same path, $P_{\nu}$, we had in INTERNAL REFERENCE NEEDEDED where $\nu(x)=a, \nu(y)=b$.

One hopes that as in the example above $P_{a,b}$ is a path, and moreover it is identical to $P_{\nu}$ defined in \ref{Val-Tree}.  In sum, we want to prove the following theorem relating the resolution of the curve $V$ to the process of finding the valuation rings of monomial valuations described in \ref{Val-Tree}.  

\begin{theorem}\label{Thm: resolution rod}Let $a,b\in \Z_{>1}$ such that $(a,b)=1$ and $a>b$.  Then $P_{a,b}=P_{\nu}$, where $\nu$ is the monomial valuation on $k(x,y)$ given by $\nu(x)=a$ and $\nu(y)=b$.\end{theorem}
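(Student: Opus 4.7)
My plan is to establish $P_{a,b} = P_\nu$ by induction along $P_\nu$, tracking the proper transform of $V = \V(x^b - y^a)$ at each step. The key invariant is that at every vertex $k[f,g]$ in $P_\nu$, the proper transform of $V$ is cut out (possibly after relabeling $f,g$) by an equation of the form $f^B - g^A = 0$ for coprime positive integers $A, B$ satisfying $A\,\nu(g) = B\,\nu(f)$. The base case $k[x,y]$ with $(A,B)=(a,b)$ is immediate: $k[x,y] \in P_\nu$ because $\nu(x), \nu(y) > 0$, and $k[x,y] \in P_{a,b}$ because $a, b \geq 2$ forces a cusp singularity at the origin.

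For the inductive step, fix $k[f,g] \in P_\nu$ with, WLOG, $\nu(f) > \nu(g)$ (equivalently $A > B$). The two children of $k[f,g]$ in $T$ are $k[g, f/g]$ and $k[f, g/f]$; a short valuation computation shows the former is $\nu$-positive and the latter is not. In the $\nu$-positive child $k[g, f/g]$, substituting $f = g(f/g)$ yields
\[
f^B - g^A \;=\; g^B\bigl((f/g)^B - g^{A-B}\bigr),
\]
so the proper transform is $\V((f/g)^B - g^{A-B})$, extending the invariant with new exponents $(A-B, B)$; this is exactly one subtraction step of the Euclidean algorithm, so the trajectory of exponents along $P_\nu$ is governed by the continued fraction of $a/b$ from Section~\ref{section: CF} and terminates at $(1,1)$. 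In the $\nu$-non-positive child $k[f, g/f]$, substituting $g = f(g/f)$ yields
\[
f^B - g^A \;=\; f^B\bigl(1 - f^{A-B}(g/f)^A\bigr),
\]
so the proper transform $\V(1 - f^{A-B}(g/f)^A)$ has nonzero constant term and misses the origin; a direct check shows it is smooth and disjoint from $\V(f) \cup \V(g)$, so the chart is resolved.

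To upgrade these local computations into the graph equality $P_{a,b} = P_\nu$, I argue both inclusions. For $P_\nu \subseteq P_{a,b}$, I distinguish three subcases at each vertex of $P_\nu$ based on the current $(A,B)$: if both $A, B \geq 2$, the proper transform has a cusp at the origin; if exactly one of $A, B$ equals $1$, the proper transform is smooth but tangent to a coordinate axis which (by an auxiliary induction) is an exceptional divisor, giving a non-normal crossing; if $(A,B) = (1,1)$ at the terminal vertex, the proper transform $\V(f - g)$ meets the two exceptional axes $\V(f), \V(g)$ in three concurrent lines at the origin. For $P_{a,b} \subseteq P_\nu$, I observe that the property ``proper transform has nonzero constant term'' propagates under any further monomial substitution of the form $f' = g'h'$ or $g' = f'h'$, so every descendant of a $\nu$-non-positive child has its proper transform missing the origin and disjoint from the coordinate axes; consequently the entire subtree of $T$ hanging off of $P_\nu$ is resolved.

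The principal obstacle I anticipate is the auxiliary bookkeeping of exceptional divisors: at each chart $k[f,g]$ on $P_\nu$, I need to identify which of $\V(f), \V(g)$ lies in the total exceptional set in order to certify the claimed non-normal crossings in the ``smooth-but-tangent'' and terminal cases. A natural approach is a parallel induction, noting that the new exceptional divisor introduced at each blowup is exactly one of the new coordinate axes, while previous exceptional divisors pull back as unions of coordinate axes in the new chart; combined with the subtraction dynamics on $(A,B)$ and the branch decomposition of $P_\nu$ from Section~\ref{Val-Tree}, this should give the required exceptional-set structure chart by chart.
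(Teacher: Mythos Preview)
Your approach is essentially the same as the paper's: both proceed by induction along $P_\nu$, maintaining the invariant that the proper transform at $k[f,g]$ has the form $f^{\nu(g)} - g^{\nu(f)}$ (your condition $A\,\nu(g)=B\,\nu(f)$ with $(A,B)=1$ forces $A=\nu(f)$, $B=\nu(g)$, since $\nu(f),\nu(g)$ stay coprime under Euclidean subtraction), and both identify the non-positive child as the resolved chart via the factorization $f^B(1 - f^{A-B}(g/f)^A)$.

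The one organizational difference worth flagging concerns your ``principal obstacle.'' The paper tracks the \emph{full} preimage $\V(f^A g^B)\cup\V(f^{\nu(g)}-g^{\nu(f)})$ from the outset, carrying the exceptional multiplicities $A,B\ge 0$ through the same induction. This makes the exceptional-divisor bookkeeping automatic: the tangent/non-normal-crossing cases are detected directly from whether $A$ or $B$ is nonzero, and the edge case $A=0$ is dispatched by observing it forces $k[f,g]=k[x,y]$. By tracking only the proper transform and deferring the exceptional information to a parallel induction, you are doing the same work twice; folding the $\V(f^Ag^B)$ factor into your invariant would eliminate the auxiliary argument entirely. The paper also isolates ``$P_{a,b}$ is a path'' as a separate lemma, whereas you absorb it into the two-inclusion argument via the pleasant observation that a nonzero constant term in the proper transform propagates down the subtree; either packaging is fine.
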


This theorem allows us to explicitly resolve the singularities of the curve $x^b=y^b$ by blowing up a sequence of points prescribed by $P_{a,b}$. See Remark 4.5. Before we prove the above theorem, let us establish the following crucial lemma.

 \begin{lem}\label{lem: path lemma}Let $a,b\in \Z_{>1}$ such that $(a,b)=1$ and $a>b$. Then $P_{a,b}$ is a path.
\begin{proof} By the definition of $P_{a,b}$, this means we must precisely show that the blowup corresponding to a given vertex of $P_{a, b}$ will feature at most one chart in which the preimage curve is either singular or has a non-normal crossing. To prove this, we shall proceed by induction. For our base case, we consider $k[x,y]$, which is in $P_{a,b}$. If we blow up $\A^2_{(x,y)}$ at the origin, we obtain the curves $\V(y^b)\cup\V((x/y)^b-y^{a-b})$ and $\V(x^b)\cup\V(1-(y/x)^ax^{a-b})$, which correspond to the coordinate rings $k[y,x/y]$ and $k[x, y/x]$, respectively. Since we took $a$ and $b$ to be relatively prime, the curve is smooth in the second chart, and is either singular or has a non-normal crossing only at the origin in the first chart, so the second node in $P_{a,b}$ is $k[y,x/y]$.

Let $k[f,g]$ be a coordinate ring in $P_{a,b}$. Our claim is that at most one of the two subsequent rings in $T_{a,b}$ is associated with a chart in which the preimage curve features a singularity or non-normal crossing. By induction assume that in the chart associated with $k[f,g]$, a chart featuring a singularity or non-normal crossing, the original curve (or its preimage) has irreducible decomposition of the form (*) $\V(f^Ag^B)\cup\V(f^{\beta}-g^{\alpha})$ for integers $A, B \ge 0$, and coprime positive integers $\alpha , \beta$; without loss of generality, $\alpha \ge \beta \ge 1$. If $\alpha = \beta$, then in fact $\alpha = \beta = 1$,\footnote{For we can factor $f^n - g^n = (f-g) \sum_{i=0}^{n-1} f^{n-1-i} g^i $ ($n > 1$), in which case the principal ideal it generates in the polynomial ring $k[f, g]$ is not prime, whence $\V(f^n-g^n)$ is not irreducible.} and so the chart features three lines meeting only at the origin, which is a non-normal crossing; so proceeding as in Example \ref{Exm: Rez 1}, we know that blowing up this chart at the origin will yield two new charts in which the respective preimage curves are each smooth with normal crossings. So without loss of generality, assume $\alpha > \beta > 1$. If we perform another blowup and look at the two charts, which have coordinates $(g, \frac{f}{g})$ and $(f, \frac{g}{f})$, respectively, then the preimage curves have respective irreducible decompositions
$$\V\left(g^{B+\beta+A}\left(\frac{f}{g}\right)^A\right) \cup \V\left(\left(\frac{f}{g}\right)^{\beta}-g^{\alpha-\beta}\right) \mbox{ and }\V\left(\left(\frac{g}{f}\right)^Bf^{\beta+A+B}\right) \cup \V \left(1-\left(\frac{g}{f}\right)^{\alpha}f^{\alpha-\beta} \right).$$
 Notice that in the second chart, the preimage curve is now smooth and has no non-normal crossings. So we consider what happens in the $k[g, \frac{f}{g}]$ chart, which of course features an irreducible decomposition of form (*).

First, suppose $\alpha-\beta\neq1$. Then the first preimage curve will have a cusp at the origin, and will therefore have a singularity. Now suppose $\alpha-\beta=1$. Then $\V\left(\left(\frac{f}{g}\right)^{\beta}-g^{\alpha-\beta}\right)$ is smooth. If we have that $A \neq 0$, then the first chart will feature a non-normal crossing, namely three curves meeting at the origin. 

If instead $A=0$, then in the chart $\A^2_{(f, g)}$, the curve has irreducible decomposition $\V(f^Ag^B)\cup\V(f^{\beta}-g^{\alpha}) = \V(g^B)\cup\V(f^{\beta}-g^{\alpha})$. When we multiply together the equations defining the irreducible components of the preimage curve, in any chart at any stage in this blowup process, we must obtain the polynomial equation $x^b-y^a=0$ up to a sign change. Therefore, without loss of generality say $f^{\beta}g^B-g^{\alpha+B}=x^b-y^a$. Since $x^b-y^a$ is irreducible, $B = 0$, so that $g=y$ and $f=x$, and so we must be considering the first blowup, which we have previously established has exactly one chart containing a singularity or non-normal crossing. The induction is complete, so we conclude that $P_{a,b}$ is a path in $T$. 
\end{proof}
 \end{lem}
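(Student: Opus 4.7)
The plan is to prove the lemma by induction on the vertices of $P_{a,b}$, with a carefully chosen invariant describing the shape of the preimage curve in every ``bad'' chart. Concretely, I would show that at each vertex $k[f,g] \in P_{a,b}$, the total transform of $V(x^b - y^a)$ in the corresponding affine chart has irreducible decomposition
\[
V(f^A g^B) \;\cup\; V(f^{\beta} - g^{\alpha}),
\]
with $A, B \in \Z_{\geq 0}$ and coprime integers $\alpha \geq \beta \geq 1$. Given this invariant, the claim that $P_{a,b}$ is a path reduces to verifying that exactly one of the two charts of the blow-up at the origin of this chart still features a singularity or a non-normal crossing, and that the decomposition in that chart again has the prescribed form.

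For the base case at $k[x,y]$, the defining equation already fits the invariant with $A = B = 0$, $\alpha = a$, $\beta = b$. A direct substitution into the two standard charts shows that in $k[x, y/x]$ the proper transform $V(1 - (y/x)^{a} x^{a-b})$ is smooth and disjoint from the exceptional divisor (using $\gcd(a,b) = 1$ and $a > b$), hence this chart is resolved; while in $k[y, x/y]$ the proper transform $V((x/y)^{b} - y^{a-b})$ meets the exceptional divisor at the origin and fits the invariant form with new coprime pair $(a-b, b)$.

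For the inductive step at a general $k[f,g] \in P_{a,b}$, I blow up the origin and compute. Substituting $g = f \cdot (g/f)$ into $f^{\beta} - g^{\alpha}$ produces $f^{\beta}\bigl(1 - (g/f)^{\alpha} f^{\alpha-\beta}\bigr)$, whose nontrivial factor does not vanish at the origin, so the $(f, g/f)$-chart is smooth with normal crossings and lies outside $P_{a,b}$. Substituting $f = g \cdot (f/g)$ into the same factor gives $g^{\beta}\bigl((f/g)^{\beta} - g^{\alpha - \beta}\bigr)$, so in the $(g, f/g)$-chart the preimage curve decomposes as $V\bigl(g^{A+B+\beta} (f/g)^{A}\bigr) \cup V\bigl((f/g)^{\beta} - g^{\alpha-\beta}\bigr)$, reproducing the invariant with new coprime pair $(\beta, \alpha-\beta)$.

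The hard part is the boundary-case analysis. When $\alpha = \beta$, coprimality forces $\alpha = \beta = 1$, so the chart contains three lines through the origin, and one must verify that a single further blow-up resolves both resulting charts, making the current vertex a leaf of $P_{a,b}$. When $\alpha - \beta = 1$, the new ``cusp factor'' becomes smooth, and the surviving chart is still bad only if $A > 0$ (three components meeting at the origin); the subcase $A = 0$ must be ruled out by a global consistency check, namely that multiplying the defining equations of the irreducible components in any chart recovers $x^b - y^a$ up to sign. This forces $B = 0$ as well, hence $f = x$ and $g = y$, returning us to the base case. Threading this global invariant through an arbitrary chain of blow-ups is the most delicate bookkeeping in the argument, but once done it closes the induction and yields that $P_{a,b}$ is a path.
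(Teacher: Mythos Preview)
Your proposal is correct and follows essentially the same approach as the paper's own proof: the same inductive invariant $V(f^A g^B)\cup V(f^{\beta}-g^{\alpha})$ with coprime $\alpha\ge\beta\ge1$, the same chart-by-chart computation after each blow-up, the same treatment of the terminal case $\alpha=\beta=1$, and the same global ``product recovers $x^b-y^a$'' argument to dispose of the subcase $A=0$ when $\alpha-\beta=1$. One minor wording slip: you say the induction reduces to showing \emph{exactly} one child chart is bad, but at the terminal vertex neither child is bad; the correct statement (which you in fact implement in your boundary analysis) is \emph{at most} one.
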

 
 Now that we know that $P_{a,b}$ is in fact a path we can prove our main result.

 \begin{proof}[Proof of Theorem \ref{Thm: resolution rod}]
 We will prove by induction that $P_{\nu}=P_{a,b}$; by the lemma, it suffices to prove they have the same vertices in succession. Also, we will show that if a node $k[f,g]$ is in $P_{\nu}$ (and therefore $P_{a,b}$), then the associated curve at this step in the resolution process is of the form $\V(f^Ag^B)\cup \V(f^{\nu(g)}-g^{\nu(f)})$ with $A, B$ some nonnegative integers, and with $( \nu(f) , \nu(g)) = 1$ (so we can invoke Euclid's algorithm, later).
 
  Let $a,b \in \Z_{>0}$ such that $a>b$ and $(a,b)=1$, and $\nu$ be the monomial valuation on $k(x,y)$ such that $\nu(x)=a, \nu(y)=b$. The first node in $T$ (i.e. the smallest ring in $T$) is $k[x,y]$, and the curve in this chart is given by $\V(x^b-y^a)=\V(x^{\nu(y)}-y^{\nu(x)})$. Clearly $( \nu(x) , \nu(y)) = 1$.
 
 The second node in $P_{\nu}$ is $k[y, x/y]$, with $\nu(y)=b, \nu(x/y)=a-b$. Likewise, the second node in $P_{a,b}$ is $k[y,x/y]$, and the curve in this chart is given by $\V(y^b)\cup \V((x/y)^b-y^{a-b})=\V(y^b) \cup \V((x/y)^{\nu(y)}-y^{\nu(x/y)})$. Clearly $(\nu(y) , \nu(x/y)) = (b, a-b) = (b, a) = 1$.
 
 So we see that the first two vertices of $P_{\nu}$ and $P_{a,b}$ agree, and the desired relationship holds.
 
 Now suppose that this relationship holds and $P_{\nu}$ is equal to $P_{a,b}$ up until some step where the vertex in both paths is $k[f,g]$, and the preimage curve for this chart is $\V(f^Ag^B)\cup \V(f^{\beta}-g^{\alpha})$, where $\nu(f)=\alpha$ and $\nu(g)=\beta$ are coprime positive integers. First suppose $\alpha \neq \beta$; without loss of generality, assume $\alpha$ is greater than $\beta$. So the next ring in $P_{\nu}$ is $k[g,f/g]$. Since $k[f,g] \in P_{a,b}$, the curve in this chart either has a singularity or non-normal crossing. Blowing up again, the proof of the previous lemma the coordinate ring shows that $k[g, f/g]$ lies in $P_{a,b}$, and the corresponding preimage curve has irreducible decomposition $$\V(f^Ag^{B+\beta}) \cup \V((f/g)^{\beta}-g^{\alpha-\beta})=\V(f^Ag^{B+\beta}) \cup \V((f/g)^{\nu(g)}-g^{\nu(f/g)}).$$ Further, $(\nu(g), \nu(f/g)) = (\nu(g), \nu(f)-\nu(g)) = (\nu(g), \nu(f)) = 1$. Thus by induction, we know that $P_{\nu}=P_{a,b}$. 
 
The last node of $P_{\nu}$ has form $k[f,g]$, with $\nu(f)=\nu(g)= 1$. (Since the nodes in $T$ following the final node in $P_{\nu}$ are $k[f,g/f], k[g,f/g]$, where $\nu(f/g)=\nu(g/f)=0$, so $\nu(g)=\nu(f)$; both equal 1 by Euclid's algorithm.)  The associated curve in this chart has irreducible decomposition $\V(f^Ag^B)\cup\V(f-g)$, that is, three lines meeting only at the origin. Blowing up this chart, we obtain the following preimage curves in the charts corresponding to $k[f,g/f]$ and $k[g,f/g]$, respectively: $\V(f^{A+1}g^B)\cup\V(1-(g/f))$ and $\V(f^Ag^{B+1})\cup \V((f/g)-1)$. Each curve is smooth with normal crossings. Thus the curve is completely resolved if we blow up at the origin in the chart corresponding to the terminal vertex in $P_{a,b}$, namely $k[f, g]$. So $P_{\nu}$ and $P_{a,b}$ have the same terminal node, completing the proof.
\end{proof}
 
\begin{rem}
By construction, $P_{a,b}$ exactly tells us where the preimage of the curve $x^b=y^a$ under a sequence of blow ups will have either singularities or non-normal crossing. Since by Lemma \ref{lem: path lemma} we know that $P_{a,b}$ is path, this means that a preimage of $x^b=y^b$ has singularities or non-normal crossings in at most one chart. Note in fact there will be at most one singularity or non-normal crossing, and this will occur at the origin of the chart it occurs in. Moreover, combining Theorem \ref{Thm: resolution rod} with Proposition 3.14 we know that $P_{a,b}$ is finite. More precisely, Theorem \ref{Thm: resolution rod} tells us that $P_{a,b}=P_\nu$ where $\nu$ is the monomial valuation on $k(x,y)$ defined by $\nu(x)=a$ and $\nu(y)=b$, and since $\nu(x)/\nu(y)=a/b$ is rational Proposition 3.14 insures $P_\nu$, and hence $P_{a,b}$ is finite. This means that after a finite number of blow ups, at origin points determined by $P_{a,b}$, the curve $x^b=y^a$ has no singularities and no non-normal intersections, meaning it is resolved. As we will discuss shortly, we actually know precisely how many blow ups are need to resolve $x^b=y^a$ in this way. 
%This allows us to explicitly resolve the singularity of the curve $x^b=y^a$ by blowing up a sequence of points specified by $P_a,b$. In particular $P_a,b$ keeps track of where the preimages of $x^b=y^a$ under blowing ups will have singularities. Thus, by blowing up at these points we manage to resolve any of the singularities and non-normal crossing of curve $x^b=y^a$.
\end{rem}

With this theorem, we have a correspondence between the process of computing the valuation ring for the monomial valuation $\nu$ on $k(x,y)$ (where $\nu(x)=a, \nu(y)=b$ are positive coprime integers) and the process of resolving the curve $V=\V(x^b-y^a)$. In computing $R_{\nu}$, we study a strictly ascending chain of $\nu$-positive subrings of $k(x,y)$, and this chain of subrings is exactly the chain of coordinate rings containing all singularities of $V$ as it is resolved via consecutive blow-ups. Since we have this connection, as well as the previously established connection between $P_{\nu}$ and the continued fraction expansion of $a/b$, there is a natural connection between the process of resolving $V$ and the continued fraction expansion of $a/b$.

\begin{cor}
If $\frac{a}{b}=[d_0;d_1,d_2, \ldots d_n]$, then the curve $\V(x^b-y^a)$ $($where $a,b \in \Z_{>1}$ are coprime$)$ will require $d_0+d_1+\ldots +d_n$ blow-ups to resolve using the previously-described algorithm.
\end{cor}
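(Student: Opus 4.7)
The plan is to reduce the corollary to a vertex count in the path $P_{a,b}$, and then apply the branch decomposition of $P_{\nu}$ developed in Section \ref{Val-Tree}.

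By Theorem \ref{Thm: resolution rod} we have $P_{a,b} = P_{\nu}$, where $\nu$ is the monomial valuation with $\nu(x)=a$ and $\nu(y)=b$. By construction of $P_{a,b}$ (and as emphasized in the remark following that theorem), the algorithm performs exactly one blow-up at the origin of the chart associated to each vertex of $P_{a,b}$, and no other blow-ups are required. Hence the number of blow-ups needed to resolve $\V(x^b-y^a)$ equals the cardinality $|V(P_{\nu})|$.

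To compute $|V(P_{\nu})|$, I would invoke the decomposition of $P_{\nu}$ into monotone positive branches
\[
P_{\nu} = B(y,x) \cup B(x/y^{m_1}, y) \cup B(y^{m_2 m_1 + 1}/x,\, x/y^{m_1}) \cup \ldots,
\]
for which Section \ref{Val-Tree} establishes that when $a/b = [d_0; d_1, \ldots, d_n]$ is rational, the sequence of indices $(m_1, m_2, \ldots, m_{n+1})$ equals $(d_0, d_1, \ldots, d_{n-1}, d_n - 1)$. Under the convention that $k[x,y] \in B(y,x)$, the first branch has length $m_1 = d_0$ and so contributes $d_0 + 1$ vertices; for each $2 \leq i \leq n$, the $i$-th branch has length $m_i - 1 = d_{i-1} - 1$ and contributes $d_{i-1}$ vertices; and the terminal branch contributes $m_{n+1} = d_n - 1$ vertices. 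Summing,
\[
|V(P_{\nu})| = (d_0+1) + d_1 + \ldots + d_{n-1} + (d_n - 1) = d_0 + d_1 + \ldots + d_n,
\]
which is the claim.

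The main subtlety here is the off-by-one bookkeeping at the two ends of $P_{\nu}$: the extra initial vertex $k[x,y]$ absorbed into $B(y,x)$ supplies a $+1$, while the terminal branch is truncated since $m_{n+1} = d_n - 1$ rather than $d_n$, supplying a $-1$; these cancel to yield exactly $\sum_{i=0}^{n} d_i$. As a sanity check, one should confirm that the total is independent of which continued fraction expansion of $a/b$ is chosen, which is immediate because $(d_n - 1) + 1 = d_n$ makes the substitution $d_n \mapsto (d_n - 1, 1)$ digit-sum-preserving. No harder ingredients are needed, since all of the structural work is already packaged in Theorem \ref{Thm: resolution rod} and the branch-length analysis of Section \ref{Val-Tree}.
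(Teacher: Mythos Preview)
Your argument is correct and is precisely the one the paper leaves implicit: the corollary is stated without proof, relying on Theorem~\ref{Thm: resolution rod} together with the branch-length/continued-fraction correspondence from Section~\ref{Val-Tree}, and you have supplied exactly that bookkeeping. The identification of the number of blow-ups with $|V(P_{\nu})|$ and the cancellation of the $+1$ from $k[x,y]$ against the $-1$ from the truncated final branch are both handled correctly, and your remark on independence from the choice of continued fraction expansion is a nice sanity check.
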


To see what this corollary means, we apply it to a specific example. 

\begin{exm}

Consider the process of resolving the curve $\V(x^7-y^{24})$. The continued fraction expansion of $24/7$ is $[3;2,3]$. So this curve will take $8$ blow-ups to be completely resolved, and the charts where the singularities will occur are given by the following coordinate rings:
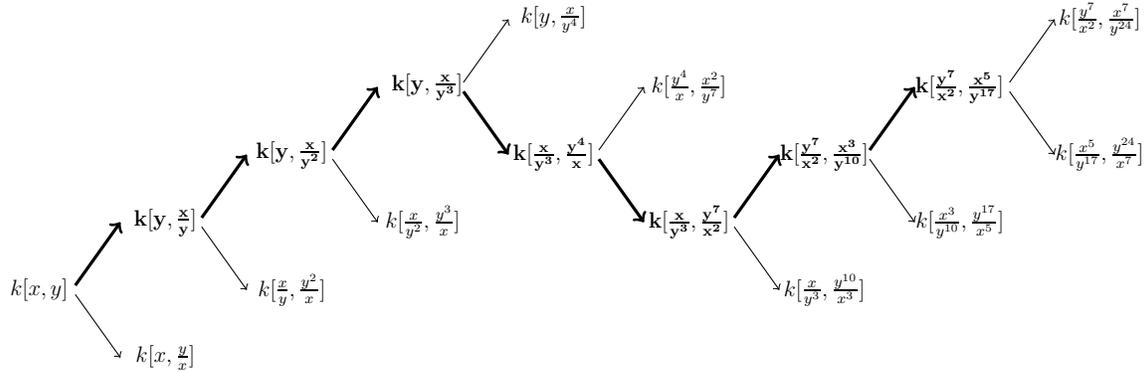
\begin{figure}[H]
\begin{center}
	\resizebox{!}{!}{\begin{tikzpicture}
	[scale=.6,colorstyle/.style={circle, draw=black!100,fill=black!100, thick,
	inner sep=0pt, minimum size=2 mm},every node/.style={scale=.7}]

	\node at (0,0){$k[x,y]$}; 
	
	\draw[very thick,->](.8,.1)--(1.8,1.5);
	\draw[->](.8,-.1)--(1.8,-1.5);
	\node at (2.8,1.5){$\mathbf{k[y, \frac{x}{y}]}$};
	\node at (2.8,-1.5){$k[x, \frac{y}{x}]$};
	
	\draw[very thick,->](3.6,1.6)--(4.6,3);
	\draw[->](3.6,1.4)--(4.6,0);
	
	\node at (5.6,3){$\mathbf{k[y, \frac{x}{y^{2}}]}$};
	\node at (5.6,0){$k[\frac{x}{y}, \frac{y^{2}}{x}]$};
	
	\draw[very thick,->](6.5,3.1)--(7.5,4.5);
	\draw[->](6.5,2.9)--(7.5,1.5);
	
	\node at (8.6,4.5){$\mathbf{k[y, \frac{x}{y^{3}}]}$};
	\node at (8.5,1.5){$k[\frac{x}{y^{2}}, \frac{y^{3}}{x}]$};
	
	\draw[->](9.4,4.6)--(10.4,6);
	\draw[very thick,->](9.4,4.4)--(10.4,3);
	
	\node at (11.4,6){$k[y, \frac{x}{y^{4}}]$};
	\node at (11.4,3){$\mathbf{k[\frac{x}{y^{3}}, \frac{y^{4}}{x}]}$};

	\draw[->](12.4,3.1)--(13.4,4.5);
	\draw[very thick,->](12.4,2.9)--(13.4,1.5);
	
	\node at (14.4, 4.5){$k[\frac{y^{4}}{x}, \frac{x^{2}}{y^{7}}]$};
	\node at (14.4, 1.5){$\mathbf{k[\frac{x}{y^{3}}, \frac{y^{7}}{x^{2}}]}$};
	\draw[very thick,->](15.4,1.6)--(16.4,3);
	\draw[->](15.4,1.4)--(16.4,0);
	
	\node at (17.4, 3){$\mathbf{k[\frac{y^{7}}{x^{2}}, \frac{x^{3}}{y^{10}}]}$};
	\node at (17.4,0){$k[\frac{x}{y^{3}}, \frac{y^{10}}{x^{3}}]$};

	\draw[very thick,->](18.4,3.1)--(19.4,4.5);
	\draw[->](18.4,2.9)--(19.4,1.5);
	\node at (20.4,4.5){$\mathbf{k[\frac{y^{7}}{x^{2}},\frac{x^{5}}{y^{17}}]}$};
	\node at (20.4,1.5){$k[\frac{x^{3}}{y^{10}}, \frac{y^{17}}{x^{5}}]$};
	
	\draw[->](21.5,4.6)--(22.5,6);
	\draw[->](21.5,4.4)--(22.5,3);
	
	\node at (23.5,6){$k[\frac{y^{7}}{x^{2}},\frac{x^{7}}{y^{24}}]$};
	\node at (23.5,3){$k[\frac{x^{5}}{y^{17}}, \frac{y^{24}}{x^{7}}]$};
	\end{tikzpicture}}
	\end{center}
	\caption{Coordinate rings where singularities occur in the resolution of $\V(x^7-y^{24})$.}
\end{figure}
\end{exm}

%So now we see that the continued fraction expansion of $a/b$ gives us enough information to explicitly describe a resolution of $\V(x^b-y^a)$.

\section{Acknowledgments}

This research was partially supported by NSF grant DMS-1001764 and NSF RTG grant DMS-0943832 with the third author supported by a NSF Graduate Research Fellowship under Grant No. PGF-031543. The first and second authors would like to thank the University of Michigan and the Department of Mathematics for hosting them as part of an NSF-REU program at the University of Michigan. The authors would like to thank an anonymous referee whose comments helped improve the readability of this paper. Finally, the authors would also like to thank their advisor Karen E. Smith for her advice and guidance, without which these results would not have been obtained.

\bibliographystyle{plain}	
\bibliography{REU2013}	

\end{document}